\newtheorem{theorem}{Theorem}[section]
\newtheorem{corollary}[theorem]{Corollary}
\newtheorem{lemma}[theorem]{Lemma}
\newtheorem{conjecture}[theorem]{Conjecture}
\theoremstyle{definition}
\newtheorem{remark}[theorem]{Remark}
\numberwithin{equation}{section}
\newcommand{\N}{\mathbb{N}}
\newcommand{\R}{\mathbb{R}}
\renewcommand{\Re}{\operatorname{Re}}
\renewcommand{\Im}{\operatorname{Im}}
\newcommand{\I}{\mathrm{i}}
\newcommand{\e}{\mathrm{e}}
\newcommand{\eps}{\varepsilon}
\newcommand{\vphi}{\varphi}
\newcommand{\MP}{\mathcal{P}}
\newcommand{\MN}{\mathcal{N}}
\newcommand{\zetaDMV}{\zeta_{\mathrm{DMV}}}
\DeclareMathOperator{\Li}{Li}
\begin{document}

\title{On zero-density estimates for Beurling zeta functions}

\author[F. Broucke]{Frederik Broucke}
%\thanks{F. Broucke was supported by a postdoctoral fellowship (grant number 12ZZH23N) of the Research Foundation -- Flanders}

\address{Department of Mathematics: Analysis, Logic and Discrete Mathematics\\ Ghent University\\ Krijgslaan 281\\ 9000 Gent\\ Belgium}

\email{fabrouck.broucke@ugent.be}

\subjclass[2020]{Primary 11N80, Secondary 11M26, 11M41}
%11N80 Generalized primes and integers
%11M26 Nonreal zeros of ζ (s) and L(s, χ); Riemann and other hypotheses
%11M41 Other Dirichlet series and zeta functions

\keywords{Beurling generalized prime number systems, well-behaved integers, zero-density estimates}

\begin{abstract}
We show the zero-density estimate
\[	
	N(\zeta_{\MP}; \alpha, T) \ll T^{\frac{4(1-\alpha)}{3-2\alpha-\theta}}(\log T)^{9}
\]
for Beurling zeta functions $\zeta_{\MP}$ attached to Beurling generalized number systems with integers distributed as $N_{\MP}(x) = Ax + O(x^{\theta})$. We also show a similar zero-density estimate for a broader class of general Dirichlet series, consider improvements conditional on finer pointwise or $L^{2k}$-bounds of $\zeta_{\MP}$, and discuss some optimality questions. 
\end{abstract}
\maketitle

%---------------------------------------------------------------------------------------------------------------------------------------%
\section{Introduction}
A \emph{Beurling generalized number system} is a pair $(\MP, \MN)$, where $\MP = (p_{j})_{j}$, the sequence of \emph{generalized primes}, consists of real numbers $p_{j}$ with $1 < p_{1} \le p_{2} \le \dotsb$ and $p_{j}\to \infty$,  and $\MN=(n_{j})_{j}$, the sequence of \emph{generalized integers}, consists of all possible products of the generalized primes, including $1$. One also orders this sequence in a non-decreasing fashion. We allow repetitions in both sequences; in the sequence $\MN$, a real number occurs as many times as its number of distinct factorizations into generalized primes.

This notion was introduced by Beurling \cite{Beurling}, who wanted to investigate the minimal assumptions needed to show the \emph{prime number theorem}. Denoting $\pi_{\MP}(x) = \sum_{p_{j}\le x}1$ and $N_{\MP}(x) = \sum_{n_{j}\le x}1$, his theorem states the asymptotic $N_{\MP}(x) = Ax + O(x(\log x)^{-\gamma})$ where $A$ is some positive constant and $\gamma > 3/2$, yields the prime number theorem in the form
\begin{equation}
\label{PNT}
	\pi_{\MP}(x) \sim \Li(x).
\end{equation}
Here, $\Li(x) = \int_{2}^{x}(\log u)^{-1}\dif u$ stands for the logarithmic integral.

We consider Beurling generalized number systems $(\MP, \MN)$ for which the integers are \emph{well-behaved}, meaning that there exists a number $\theta\in[0,1)$ for which
\begin{equation}
\label{well-behaved}
	N_{\MP}(x) = Ax + O(x^{\theta}),
\end{equation}
for some constant $A>0$, the \emph{density} of the integers. If we want to mention the exponent $\theta$, we will say the the integers are $\theta$-well-behaved. This assumption is quite natural, for instance it readily yields the analytic continuation of $\zeta_{\MP}(s) - \frac{A}{s-1}$ to the half-plane $\Re s > \theta$. Here $\zeta_{\MP}(s)$ is the \emph{Beurling zeta function} associated with the system $(\MP, \MN)$:
\[
	\zeta_{\MP}(s) = \sum_{j=1}^{\infty}\frac{1}{n_{j}^{s}} = \prod_{j=1}^{\infty}\frac{1}{1-p_{j}^{-s}}, \quad \Re s > 1.
\]

Besides being of independent interest, the class of Beurling zeta functions coming from well-behaved integers is also an interesting ``testing ground'' to analyze the strength of the analytic techniques used occurring in the study of the Riemann zeta function in the critical strip and the distribution of prime numbers. For example, the results of this paper can be regarded as a study on how far the classical zero-detection methods can be pushed in a general setting: our results apply to a very broad class of (general) Dirichlet series, which also includes many of the classical $L$-functions, see Theorem \ref{general theorem} and Remark \ref{general coefficients} below. 

\medskip
Our main goal is to obtain \emph{zero-density estimates}. These are bounds for the number of zeros of $\zeta_{\MP}(s)$ in rectangles: let $\alpha>\theta$, $T>0$, then we denote
\[
	N(\zeta_{\MP}; \alpha, T) = \#\{\rho = \beta+\I\gamma: \zeta_{\MP}(\rho)=0, \beta\ge\alpha, |\gamma|\le T\}.
\]
We often simply write $N(\alpha, T)$ if the Beurling zeta function is clear from the context. Classically, one can use such zero-density estimates to study the distribution of prime numbers in short intervals. This application is also present in the Beurling case. We refer to \cite[Section 4]{BrouckeDebruyne} for more details.

From the Euler product representation of $\zeta_{\MP}(s)$, it is immediate that $\zeta_{\MP}(s)$ has no zeros with $\beta>1$. Actually, the well-havedness of the integers \eqref{well-behaved} implies the zero-free region
\[
	\sigma > 1- \frac{C(1-\theta)}{\log \abs{t}}, \quad \abs{t}\ge T_{0},
\]
for some absolute constant $C$ and some $T_{0}$ depending on the system $(\MP, \MN)$. This was first shown by Landau \cite{Landau} for Dedekind zeta functions attached to number fields, but his method readily applies to the Beurling case.

We are thus interested in bounds of $N(\alpha, T)$ in the critical range $\theta < \alpha < 1$.
The local number of zeros can be bounded by an application of Jensen's formula, see e.g.\ Lemma \ref{local zeros F} below, or \cite[Lemma 3.3]{ReveszvonMangoldt}.
\begin{lemma}
\label{local number of zeros}
Let $\zeta_{\MP}(s)$ be the Beurling zeta function associated with a number system $(\MP, \MN)$ with $\theta$-well-behaved integers. For each $T>2$ and $\alpha > \theta$, the number of zeros $\rho = \beta+\I \gamma$ of $\zeta_{\MP}(s)$ with $\beta\ge\alpha$ and $\abs{\gamma}\in [T, T+1]$ is bounded by $O_{\alpha}(\log T)$.
\end{lemma}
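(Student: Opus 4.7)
The plan is to apply Jensen's formula to the auxiliary function $F(s) := (s-1)\zeta_{\MP}(s)$. By \eqref{well-behaved} and partial summation,
\[
    F(s) = A + (s-1)\biggl(A + s\int_{1}^{\infty}\frac{N_{\MP}(x) - Ax}{x^{s+1}}\dif x\biggr), \qquad \Re s > \theta,
\]
so $F$ is holomorphic on $\Re s > \theta$ with $\abs{F(s)} \ll_{\eta} \abs{s}^{2}$ uniformly on every half-plane $\Re s \ge \theta + \eta$, $\eta > 0$. Since the coefficients of $\zeta_{\MP}$ are real, its zeros come in complex conjugate pairs, so it suffices to bound the zeros with $\gamma \in [T, T+1]$.

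Fix $\sigma_{0} > 1$ to be chosen momentarily and set $s_{0} := \sigma_{0} + \I(T + 1/2)$. I apply Jensen's formula on the disk $\abs{s - s_{0}}\le R$ for a radius $R$ satisfying
\[
    R < \sigma_{0} - \theta \qquad \text{and} \qquad R > r_{0} := \sqrt{(\sigma_{0} - \alpha)^{2} + \tfrac{1}{4}}.
\]
The first inequality ensures that the closed disk is contained in the half-plane of holomorphy of $F$ (and, for $T$ large enough, avoids the point $s = 1$), while the second guarantees that every zero $\rho = \beta + \I\gamma$ with $\beta \ge \alpha$ and $\gamma \in [T, T+1]$ satisfies $\abs{\rho - s_{0}} \le r_{0} < R$, and hence lies inside the disk. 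A short computation gives $(\sigma_{0} - \theta)^{2} - r_{0}^{2} = 2\sigma_{0}(\alpha - \theta) + \theta^{2} - \alpha^{2} - 1/4$, which tends to $+\infty$ as $\sigma_{0}\to\infty$; the two conditions are therefore compatible for $\sigma_{0}$ sufficiently large in terms of $\alpha - \theta$, and this is the one point where the strict inequality $\alpha > \theta$ is used in an essential way.

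Jensen's formula applied to $F$ at $s_{0}$, which is nonzero by the Euler product for $\zeta_{\MP}$, then yields
\[
    \#\{\rho : \abs{\rho - s_{0}} \le r_{0}\}\cdot\log\frac{R}{r_{0}} \le \sum_{\abs{\rho - s_{0}} \le R}\log\frac{R}{\abs{\rho - s_{0}}} \le \log\frac{\max_{\abs{s - s_{0}} = R}\abs{F(s)}}{\abs{F(s_{0})}}.
\]
The maximum on the right is $\ll_{\alpha} T^{2}$ by the bound on $F$ displayed above, while $\abs{F(s_{0})} = \abs{s_{0} - 1}\cdot\abs{\zeta_{\MP}(s_{0})} \gg_{\alpha} T$, using the Euler product estimate $\abs{\zeta_{\MP}(s_{0})} \ge \zeta_{\MP}(2\sigma_{0})/\zeta_{\MP}(\sigma_{0}) \gg_{\alpha} 1$. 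Hence the right-hand side is $\ll_{\alpha}\log T$, and dividing by the positive constant $\log(R/r_{0})$ (which depends only on $\alpha$) produces the desired bound $O_{\alpha}(\log T)$. The main obstacle, such as it is, lies in the simultaneous choice of $\sigma_{0}$ and $R$; once these are fixed, the upper and lower bounds on $F$ are immediate consequences of the well-behavedness hypothesis and of the Euler product, respectively.
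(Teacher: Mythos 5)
Your proof is correct and is essentially the approach the paper takes (and delegates to Lemma~\ref{local zeros F}): Jensen's formula on a disk centered at a point with $\Re s>1$, a polynomial upper bound for $(s-1)\zeta_{\MP}(s)$ coming from \eqref{well-behaved}, and a lower bound at the center via the Euler product. The one small refinement is that you let the abscissa $\sigma_{0}$ of the center grow with $1/(\alpha-\theta)$ so that a single disk always contains the box and stays in the half-plane of holomorphy, whereas the paper fixes the center at $2+\I T$ and implicitly covers the box with $O_{\alpha}(1)$ disks when $\alpha$ is close to $\theta$; both routes give the same $O_{\alpha}(\log T)$ bound.
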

From this lemma we see that $N(\alpha, T) \ll_{\alpha} T\log T$. The goal of zero-density estimates is to improve upon this ``trivial'' estimate. %when $\alpha$ is sufficiently close to $1$.
More precisely, our goal is to find an abscissa $\sigma_{0} \in (\theta,1)$  and a function $c(\alpha)<1$ for which a bound of the form
\[
	N(\alpha, T) \ll T^{c(\alpha)}(\log T)^{O(1)} \quad \text{or} \quad N(\alpha, T) \ll_{\eps} T^{c(\alpha)+\eps}, \text{ for every } \eps>0,
\]
holds uniformly for $\alpha>\sigma_{0}$. 

For zeta functions of well-behaved number systems, a zero-density estimate was first shown by Sz.\ R\'ev\'esz in \cite{Reveszdensity} under the additional assumptions $\MN \subseteq \N$ (the so-called integrality assumption) and a sort of ``Ramanujan condition'' expressing that $\MN$ does not have too many repeated values. A little later, these restrictions could be removed, as found independently by R\'ev\'esz in \cite{ReveszCarlson} and by G.\ Debruyne and the author in \cite{BrouckeDebruyne}, where the admissible values
\begin{equation}
\label{previous c}
	\sigma_{0} = \frac{11+\theta}{12}, \quad c(\alpha) = \frac{12(1-\alpha)}{1-\theta}, \quad \text{and} \quad \sigma_{0} = \frac{2+\theta}{3}, \quad c(\alpha) = \frac{c_{1}(\alpha)(1-\alpha)}{1-\theta},
\end{equation}
respectively, with $c_{1}(\alpha)$ an increasing function with $c_{1}(\frac{2+\theta}{3})=3$ and $c_{1}(1) = 4$, were established. With the integrality assumption and the Ramanujan condition, the value of $c(\alpha)$ can be lowered further when $\alpha$ is sufficiently close to $1$, as shown by J.\ Pintz and R\'ev\'esz in the recent preprint \cite{PintzRevesz}. For example, they obtained $N(\alpha, T) \ll_{\eps} T^{\frac{2(1-\alpha)}{\alpha-\theta}+\eps}$ when $\alpha$ is sufficiently close to $1$.

In this paper, we keep the general setup of $\theta$-well-behaved integers without imposing additional assumptions. We obtain a zero-density estimate in the widest possible range $\alpha >\sigma_{0}=\frac{1+\theta}{2}$ and also lower the value of the function $c(\alpha)$ in \eqref{previous c}.
%(although for $\alpha$ close to $1$, we could not reach the values from \cite{PintzRevesz} where additional assumptions were used). 
Our main result is
\begin{theorem}
\label{zero-density theorem}
Let $(\MP, \MN)$ be a Beurling number system satisfying \eqref{well-behaved} for some $\theta \in [0,1)$. Then we have uniformly for $\alpha \ge \frac{1+\theta}{2}$ and $T \ge 2$, 
\begin{equation}
\label{main zero-density estimate}
	N(\zeta_{\MP}; \alpha, T) \ll T^{\frac{4(1-\alpha)}{3-2\alpha-\theta}}(\log T)^{9}.
\end{equation}
\end{theorem}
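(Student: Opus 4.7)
The plan is to adapt the classical Ingham-Huxley zero-detection strategy to the Beurling setting, using (i) a smoothed truncated Möbius approximation to $1/\zeta_{\MP}$, (ii) Lemma \ref{local number of zeros} to pass to well-spaced subsets of zeros, (iii) the Halász-Montgomery large-values inequality for Dirichlet polynomials, and (iv) a mean-square bound for $\zeta_{\MP}$ on the line $\Re s = (1+\theta)/2$ (obtained from \eqref{well-behaved} by writing $\zeta_{\MP}(s)-A/(s-1)$ as a Mellin transform of $N_{\MP}(x)-Ax$ and applying Plancherel).

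First I would turn each zero $\rho = \beta+\I\gamma$ with $\beta \ge \alpha$ into a lower bound for a Dirichlet polynomial. Let $\mu_{\MP}$ be the Beurling Möbius function and set $M_{Y}(s) = \sum_{n_{j}\le Y}\mu_{\MP}(n_{j})n_{j}^{-s}$. Starting from $\zeta_{\MP}(s)M_{Y}(s) = 1 + \sum_{n_{j} > Y}b_{Y}(n_{j})n_{j}^{-s}$, where $b_{Y}$ is supported on $n_{j} > Y$, apply a smooth Mellin-Perron formula with a rapidly decaying kernel $k$ of width $Z$. Shifting the contour past $s=0$ and using \eqref{well-behaved} to estimate the residue at the simple pole of $\zeta_{\MP}$ and the tail of the contour, one arrives at an inequality of the shape
\[
    1 \ll \Bigl|\sum_{n_{j} > Y}b_{Y}(n_{j})k(n_{j}/Z)n_{j}^{-\rho}\Bigr| + \bigl(\text{errors depending on } Y,Z,T^{\theta}\bigr),
\]
provided $Y$ and $Z$ are chosen compatibly with $T$ and $\theta$. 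A dyadic decomposition of the tail and an extraction of a well-spaced subset of zeros (losing only a factor $O(\log T)$ by Lemma \ref{local number of zeros}) reduce matters, for each dyadic scale $Y < M \le Z$, to counting the well-spaced points $\rho_{r}$ on which a single Dirichlet polynomial $F_{M}(s) = \sum_{M<n_{j}\le 2M}c(n_{j})n_{j}^{-s}$ satisfies $|F_{M}(\rho_{r})| \gg (\log T)^{-1}$.

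Second, I would insert this into the hybrid large-values inequality
\[
    R \ll \bigl(GV^{-2} + G^{2}TM^{1-2\alpha}V^{-6}\bigr)(\log T)^{O(1)},
\]
where $G \ll M^{1-2\alpha}(\log M)^{O(1)}$ using $|b_{Y}(n_{j})|\le d_{\MP}(n_{j})$ together with a Beurling divisor bound of the shape $\sum_{n_{j}\le x}d_{\MP}(n_{j})^{2} \ll x(\log x)^{O(1)}$, itself a standard consequence of \eqref{well-behaved}. The second term, which dominates throughout the critical strip, rests on a mean-square estimate for $F_{M}$ coming from the mean-square of $\zeta_{\MP}$ on $\Re s = (1+\theta)/2$; this is precisely why the range of validity is $\alpha \ge (1+\theta)/2$. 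Summing over the $O(\log T)$ dyadic scales and balancing $V$, $Y$, and $Z$ against $T^{\theta}$ (which enters via the error in the Mellin-Perron step) makes the two terms of the Halász-Montgomery inequality equilibrate, yielding the exponent $4(1-\alpha)/(3-2\alpha-\theta)$, with the $\theta$ appearing in the denominator precisely through this coupling. The factor $(\log T)^{9}$ is accumulated from the dyadic split, the well-spacing extraction, the divisor bound, and the large-values inequality. The main obstacle is supplying the mean-square bound for $\zeta_{\MP}$ on the line $\Re s = (1+\theta)/2$ purely from \eqref{well-behaved}, as well as verifying that the losses along the way do not exceed a power $(\log T)^{9}$; the former reflects the absence of Atkinson-type tools in the Beurling setting, and presumably proceeds via Plancherel applied to the truncated error $N_{\MP}(x) - Ax$.
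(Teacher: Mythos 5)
Your proposal gets the right raw materials (a truncated Möbius mollifier, a smooth Mellin--Perron step, dyadic decomposition, well-spaced extraction via Lemma~\ref{local number of zeros}, and the mean-square of $\zeta_{\MP}$ on $\Re s=\frac{1+\theta}{2}$ as the ingredient dictating the range $\alpha\ge\frac{1+\theta}{2}$), but there are two structural gaps that would make it fail. First, after the contour shift, the remaining integral along $\Re w=\frac{1+\theta}{2}-\sigma$ is \emph{not} an error term that can be made uniformly small by tuning $Y,Z$: it involves $\zeta_{\MP}\bigl(\frac{1+\theta}{2}+\I(t+v)\bigr)$, which has no useful pointwise control. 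Your inequality ``$1\ll|\text{polynomial}|+(\text{errors})$ with small errors'' is therefore not available at every zero. This is precisely why the paper's argument \emph{splits} the zeros into two classes: $\mathcal{R}_{1}$ where the Dirichlet polynomial \eqref{type I} is large, and $\mathcal{R}_{2}$ where the remaining integral \eqref{type II} is large. Class $\mathcal{R}_{2}$ is handled by Gallagher's lemma to pass to a continuous integral, then the $L^{2}$ bounds for $\zeta_{\MP}$ and $\zeta_{\MP}'$ (Theorem~\ref{MVT zeta}) together with an $L^{2}$ bound for the mollifier $M_{X}$ obtained from the mean-value theorem with the $\chi$-factor.

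Second, you invoke the Hal\'asz--Montgomery large-values inequality and attribute the ``$V^{-6}$'' term to a mean-square estimate for $F_{M}$ coming from the mean-square of $\zeta_{\MP}$. This conflates two separate things. The mean-square of the Dirichlet polynomial $F_{M}$ over well-spaced points comes from the Montgomery--Vaughan-type theorem adapted to general frequencies via the local cluster count $\chi$ (Theorems~\ref{MVT Dirichlet polynomials}--\ref{discrete MVT S}); the mean-square of $\zeta_{\MP}$ enters only on the $\mathcal{R}_{2}$ side (and, in the conditional Section~\ref{sec: conditional improvements}, in bounding the Gram matrix of the Hal\'asz--Montgomery step). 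More importantly, the paper explicitly shows (Remark after \eqref{HM k=1}) that with only the unconditional $L^{2}$ information the Hal\'asz--Montgomery inequality supplies no improvement over the plain mean-value theorem, so a Huxley-style large-values framework cannot be the engine of Theorem~\ref{zero-density theorem}. The correct balancing is done purely with the $\chi$-factor mean-value theorem for $\mathcal{R}_{1}$, yielding \eqref{R1}, and the second-moment bound for $\mathcal{R}_{2}$, yielding \eqref{R2}, with the choices $X=T^{\frac{1}{1-\theta}}$ and $Y=T^{\frac{2}{3-2\alpha-\theta}}$ producing the exponent $\frac{4(1-\alpha)}{3-2\alpha-\theta}$.
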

Applying this to the system $(\mathbb{P}, \N)$ of rational primes and integers, for which we can take $\theta = 0$, we obtain the estimate $N(\zeta; \alpha, T) \ll T^{\frac{4(1-\alpha)}{3-2\alpha}}(\log T)^{9}$ for the number of zeros of the Riemann zeta function. This estimate was first shown by Titchmarsh \cite{Titchmarsh30} (with the power of $\log T$ replaced by $T^{\eps}$), whose proof uses the approximate functional equation of the Riemann zeta function, which is in general not available for Beurling zeta functions. 

\medskip

Our proof of Theorem \ref{zero-density theorem} does not require any (approximate) functional equation and actually applies to quite a broad class of general Dirichlet series $F(s) = \sum_{j}a_{j}n_{j}^{-s}$. An important aspect in the proof is the existence of the inverse of $F$, $G(s) = 1/F(s)$ as a general Dirichlet series, so it is sensible to assume that $(n_{j})_{j}$ is a sequence of Beurling generalized integers. In fact, if it is not, we can without loss of generality consider $F$ as a Dirichlet series over the multiplicative semigroup generated by $(n_{j})_{j}$.
\begin{theorem}
\label{general theorem}
Let $(n_{j})_{j\ge1}$ (with $n_{1}=1$) be a sequence of Beurling generalized integers satisfying $N(x+x^{\theta}) - N(x-x^{\theta}) \ll_{\eps} x^{\theta+\eps}$ for some $\theta\in [0,1)$ and for every $\eps>0$. Let
\[
	F(s) = 1 + \sum_{j=2}^{\infty}\frac{a_{j}}{n_{j}^{s}}, \quad G(s) = \frac{1}{F(s)} = 1 + \sum_{j=2}^{\infty}\frac{b_{j}}{n_{j}^{s}}
\]
be (general) Dirichlet series satisfying $a_{j} \ge 0$ and the ``Ramanujan bound'' $a_{j} \ll_{\eps} n_{j}^{\eps}$, and $\abs{b_{j}} \ll_{\eps} n_{j}^{\eps}$ for every $\eps>0$. Suppose further that 
\begin{equation}
\label{A(x)}
	A(x) \coloneqq \sum_{n_{j}\le x}a_{j} = xP(\log x) + O\bigl(x^{\theta}(\log x)^{q}\bigr)
\end{equation}
for a certain polynomial $P$ of degree $d$ and some $q\in \R$. Then $F(s)$ has meromorphic continuation to $\Re s > \theta$ whose only singularity is a pole of order $d$ at $s=1$, and
\[
	N(F; \alpha, T) \ll_{\eps} T^{\frac{4(1-\alpha)}{3-2\alpha-\theta}+\eps}; \quad \text{for every } \eps > 0,
\]
uniformly for $\alpha \ge \frac{1+\theta}{2}$.
\end{theorem}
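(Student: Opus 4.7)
The proof follows the classical zero-detection / large-values paradigm of Titchmarsh, adapted to dispense with any (approximate) functional equation for $F$.

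\emph{Zero detection.} Let $1 \le Y \le X$ be parameters, and form the mollifier $M_{Y}(s) = \sum_{n_{j} \le Y} b_{j}\, n_{j}^{-s}$. Since $G = 1/F$, the Dirichlet coefficients $d_{n}$ of the product $F M_{Y}$ satisfy $d_{1} = 1$, $d_{n} = 0$ for $n_{j} \in (1, Y]$, and $|d_{n}| \ll_{\eps} n_{j}^{\eps}$ for $n_{j} > Y$, by the Ramanujan bounds on $a_{j}$ and $b_{j}$. For a zero $\rho = \beta + \I\gamma$ of $F$ with $\beta \ge \alpha$ and $|\gamma| \ge (\log T)^{2}$ (finitely many others being handled directly via Lemma \ref{local number of zeros}), Mellin inversion gives
\[
	\sum_{j} d_{j}\, e^{-n_{j}/X}\, n_{j}^{-\rho} = \frac{1}{2\pi\I} \int_{(c)} F(\rho+w)\, M_{Y}(\rho+w)\, \Gamma(w)\, X^{w}\, dw,
\]
which one shifts to $\Re w = \theta - \beta + \eps$. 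The residue at $w = 0$ vanishes because $F(\rho) = 0$; the residue at $w = 1 - \rho$ from the pole of $F$ is negligible by the exponential decay of $\Gamma(1-\rho)$ in $|\gamma|$; and the shifted integral is $O_{\eps}(Y^{1-\theta+\eps} X^{\theta - \beta + \eps} T^{1+\eps})$, using the Phragm\'en--Lindel\"of-type growth $|F(\theta + \eps + \I t)| \ll_{\eps} (1 + |t|)^{1+\eps}$ derived from \eqref{A(x)} by partial summation, the trivial bound $|M_{Y}(\theta + \eps + \I t)| \ll_{\eps} Y^{1 - \theta + \eps}$, and the rapid decay of $\Gamma$. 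Isolating the $d_{1}$ term then forces
\[
	\Bigl|\sum_{n_{j} > Y} d_{j}\, e^{-n_{j}/X}\, n_{j}^{-\rho}\Bigr| \gg 1
\]
provided $Y^{1-\theta} T \ll X^{\alpha - \theta - \eps}$.

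\emph{Dyadic reduction and large values.} The rapid decay of $e^{-n/X}$ past $n \asymp X \log T$ allows splitting the tail into $O(\log T)$ dyadic pieces $(N, 2N]$ with $N \in [Y, X\log T]$, so that for each zero some dyadic scale $N$ yields $|D_{N}(\rho)| \gg (\log T)^{-1}$, where $D_{N}(s) = \sum_{N < n_{j} \le 2N} d_{j} e^{-n_{j}/X}\, n_{j}^{-s}$. By Lemma \ref{local number of zeros}, passing to a $1$-spaced subset of zeros costs another $\log T$ factor, and it remains to bound, for each dyadic $N$, the number $R_{N}$ of $1$-spaced $\rho$ with $|D_{N}(\rho)| \gg (\log T)^{-1}$. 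I would apply the Montgomery--Vaughan mean value estimate, valid on Beurling integers precisely because of the spacing hypothesis $N(x + x^{\theta}) - N(x - x^{\theta}) \ll_{\eps} x^{\theta + \eps}$,
\[
	\int_{0}^{T} |D_{N}(\alpha + \I t)|^{2}\, dt \ll_{\eps} T^{\eps}\, (T + N)\, N^{1-2\alpha},
\]
together with the corresponding fourth-moment estimate
\[
	\int_{0}^{T} |D_{N}(\alpha + \I t)|^{4}\, dt \ll_{\eps} T^{\eps}\, (T + N^{2})\, N^{2-4\alpha},
\]
obtained by expanding $|D_{N}|^{2}$ as a Dirichlet polynomial of length $O(N^{2})$ whose coefficients are dominated by the generalized divisor function (whose second moment is $\ll N^{2+\eps}$ by the Ramanujan hypotheses). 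Transferring these to $1$-spaced points and combining the two bounds yields $R_{N} \ll_{\eps} T^{\eps}(N^{2 - 2\alpha} + T N^{2 - 4\alpha})$.

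\emph{Optimization and obstacles.} Minimizing $\max_{N \in [Y, X\log T]} R_{N}$ over $(X, Y)$ subject to the zero-detection constraint $Y^{1-\theta} T \ll X^{\alpha-\theta}$ then leads to the claimed exponent $4(1-\alpha)/(3 - 2\alpha - \theta)$, which is the natural $\theta$-deformation of the classical Titchmarsh exponent. The main obstacle is the zero-detection step: in the absence of a functional equation, the shifted-contour integral must be controlled purely from \eqref{A(x)} (through the polynomial growth of $F$ on $\Re s = \theta + \eps$), and this is what forces $X$ to be taken rather large relative to $T$ and in turn pins down the exponent at the Titchmarsh level rather than a sharper Ingham/Huxley-type value. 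A secondary technical point is the extension of the Montgomery--Vaughan and fourth-moment machinery to sums over Beurling integers, which rests crucially on the two standing hypotheses of the theorem: the well-spacing condition on $N(x)$ and the Ramanujan-size bounds on $a_{j}$ and $b_{j}$.
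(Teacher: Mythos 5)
Your proposal departs from the paper in the zero-detection step, and this departure is fatal: the route you take cannot reach the claimed exponent, and the large-value estimates you invoke are misstated.

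\textbf{The detection step.} You shift the Mellin contour all the way to $\Re w = \theta - \beta + \eps$ and bound the shifted integral pointwise using the convexity estimate for $F$. This forces the constraint $Y^{1-\theta} T \ll X^{\alpha-\theta}$ before a zero can even be ``detected'' by the mollified tail. The paper instead shifts only to $\Re w = \frac{1+\theta}{2}-\sigma$, obtaining a dichotomy: either the tail of $F\cdot G_X$ is large (Type~I), or the shifted integral is large (Type~II), and the Type~II count is controlled via the second moment estimates for $F$ and $F'$ on the line $\sigma = \frac{1+\theta}{2}$ from \cite{BrouckeHilberdink}. Your approach never invokes those $L^{2}$ bounds of $F$ and pays for it with a much more restrictive detection constraint.

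\textbf{The mean value bounds.} Your stated estimate $\int_{0}^{T}|D_N(\alpha+\I t)|^{2}\,dt \ll_\eps T^{\eps}(T+N)N^{1-2\alpha}$ drops a factor $N^{\theta}$. The spacing hypothesis $N(x+x^{\theta})-N(x-x^{\theta}) \ll_\eps x^{\theta+\eps}$ allows up to $x^{\theta+\eps}$ Beurling integers in an interval of length $2x^{\theta}$, so the Montgomery--Vaughan argument must be run with the cluster-counting function $\chi$ of Theorem~\ref{MVT Dirichlet polynomials}, giving $\int_{0}^{T}|D_N|^{2}\,dt \ll_\eps (T+N^{1-\theta})N^{1+\theta-2\alpha+\eps}$; the analogous fourth moment acquires $N^{2\theta}$. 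Moreover, the claimed combination $R_N \ll_\eps T^{\eps}(N^{2-2\alpha}+TN^{2-4\alpha})$ does not follow from the stated second and fourth moments even in the case $\theta=0$: at $N\asymp T^{1/(3-2\alpha)}$ both moment bounds yield $R_N\ll T^{\eps}T^{(4-4\alpha)/(3-2\alpha)}$, whereas your formula would give the strictly smaller $T^{\eps}T^{(2-2\alpha)/(3-2\alpha)}$ (for example $T^{1/3}$ versus $T^{2/3}$ at $\alpha=3/4$). No interpolation of the stated inequalities produces the claimed $R_N$ bound.

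\textbf{The optimization does not close.} Even if one grants the bound $R_N\ll T^{\eps}(N^{2-2\alpha}+TN^{2-4\alpha})$, minimizing $\max\{X^{2-2\alpha},\,T Y^{2-4\alpha}\}$ under your constraint $Y^{1-\theta}T\ll X^{\alpha-\theta}$ does not produce the exponent $4(1-\alpha)/(3-2\alpha-\theta)$. Taking $\theta=0$ and $\alpha=3/4$ for concreteness, the constraint is $X^{3/4}\gg YT$; balancing $X^{1/2}$ against $TY^{-1}$ forces $Y=T^{1/5}$, $X=T^{8/5}$ and a bound $N(F;\alpha,T)\ll T^{4/5+\eps}$, not $T^{2/3+\eps}$. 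With the corrected second-moment bound the situation is worse still. To hit the Titchmarsh exponent one must be allowed to take the mollifier much longer (the paper takes $X=T^{1/(1-\theta)}$ and $Y=T^{2/(3-2\alpha-\theta)}$, with $X<Y$ in their notation), and that freedom is exactly what the Type~I/Type~II dichotomy combined with the $L^{2}$ bound for $F$ on the $\frac{1+\theta}{2}$-line provides.
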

It is an interesting question whether some specific structure of the Beurling zeta function $\zeta_{\MP}(s)$, e.g.\ the fact that $a_{j} \equiv1$, or $\zeta_{\MP}$ having an Euler product, or simply having multiplicative coefficients, can be leveraged in some way to obtain a better zero-density estimate, or whether the number of zeros of $\zeta_{\MP}(s)$ can in general be as large (up to a factor $T^{\eps}$) as the number of zeros of more general Dirichlet series $F(s)$.

If more analytic information of $\zeta_{\MP}(s)$ is available, then Theorem \ref{zero-density theorem} can be improved. We investigate the case of higher moment estimates or pointwise bounds on the line $\sigma=\frac{1+\beta}{2}$, resulting in Theorems \ref{zero-density moments} and \ref{zero-density subconvexity}, respectively.

\medskip
The paper is organized as follows. First we discuss some preliminary results in Section \ref{sec: preliminary results}. The two main technical ingredients are a mean value theorem for general Dirichlet polynomials and a second moment estimate for Beurling zeta functions. Next, we prove Theorem \ref{zero-density theorem} in Section \ref{sec: proof}. We follow this up with Section \ref{sec: general}, where we discuss the proof of Theorem \ref{general theorem}. In Section \ref{sec: conditional improvements} we prove the conditional improvements of Theorem \ref{zero-density theorem}, namely Theorems \ref{zero-density moments} and \ref{zero-density subconvexity}. We also discuss a hypothetical improved mean value theorem in the style of Montgomery's conjecture for Dirichlet polynomials.
As alluded to, the range of validity $\alpha > \frac{1+\theta}{2}$ of density estimates is optimal. When $\theta=0$, this follows simply from considering the Riemann zeta function. In the final section, we indicate why this range should be optimal for any value of $\theta$, and prove this when $\theta>1/2$. We also make some comments on the question of the optimal value of $c(\alpha)$.

%---------------------------------------------------------------------------------------------------------------------------------------%
\section{Preliminary results}
\label{sec: preliminary results}
The proof of Theorem \ref{zero-density theorem} rests upon two main results: a mean value theorem for general Dirichlet polynomials, and a mean value theorem for Beurling zeta functions.

The classical mean value theorem (see e.g. \cite[Theorem 6.1]{Montgomery}) for ordinary Dirichlet polynomials $\sum_{n}a_{n}n^{-\I t}$ relies on the separation of the frequencies $\log n$, $n\in \N$ (see also \cite[Corollary 2]{MontgomeryVaughan}). A generalization of this result was established in \cite{BrouckeDebruyne}, where the lack of separation of general frequencies $\log n_{j}$ was remedied by introducing a factor $\chi$, which counts the size of ``local clusters'' of $n_{j}$'s. Theorem 2.1 of \cite{BrouckeDebruyne} states:
\begin{theorem}
\label{MVT Dirichlet polynomials}
Let $N\ge2$ and suppose that $1\le n_{1} \le n_{2} \le \dotso \le n_{J} \le N$ are real numbers. For $\lambda > 0$ denote by $\chi(x, \lambda)$ the number of $n_{j}$'s within distance $\lambda$ of $x$:
\[
	\chi(x,\lambda) = \#\{j: |x-n_{j}| \le \lambda\}.
\]
Suppose $a_{j}$ $(j=1, \dotsc, J)$ are complex numbers, and set
\[
	S(\I t) = \sum_{j=1}^{J}\frac{a_{j}}{n_{j}^{\I t}}.
\]
Then for $T_{0}\in\R$, $T>0$, and $\eta>0$ we have
\[
	\int_{T_{0}}^{T_{0}+T}\abs{S(\I t)}^{2}\dif t \ll \biggl(T + \frac{1}{\eta}\biggr)\sum_{j=1}^{J}\chi(n_{j},\eta N)\abs{a_{j}}^{2}.
\]
\end{theorem}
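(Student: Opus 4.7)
My plan is to prove Theorem~\ref{MVT Dirichlet polynomials} by a Beurling--Selberg majorant argument. The first step would be to construct a non-negative function $W\colon\R\to[0,\infty)$ with three properties: (i) $W(t)\ge 1$ for all $t\in[T_{0}, T_{0}+T]$; (ii) $\int_{\R}W(t)\dif t \ll T + 1/\eta$; and (iii) the Fourier transform $\widehat{W}$ is supported in $[-c\eta, c\eta]$ for some absolute constant $c>0$. Such a majorant exists as a consequence of the classical Selberg/Vaaler extremal construction, in which the $L^{1}$-mass of a non-negative majorant of $\mathbf{1}_{[T_{0},T_{0}+T]}$ can be made as small as $T+O(1/\eta)$ subject to a bandwidth constraint of this form.

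Once $W$ is in hand, the second step is a short computation: expanding the square and interchanging the integral and the (finite) sum,
\[
	\int_{T_{0}}^{T_{0}+T}\abs{S(\I t)}^{2}\dif t \le \int_{\R}W(t)\abs{S(\I t)}^{2}\dif t = \sum_{j,k=1}^{J}a_{j}\overline{a_{k}}\,\widehat{W}(\log n_{j}-\log n_{k}).
\]
Non-negativity of $W$ yields $\abs{\widehat{W}(\xi)}\le\widehat{W}(0)=\int_{\R}W\ll T+1/\eta$, while property (iii) forces $\widehat{W}(\log n_{j}-\log n_{k})=0$ unless $\abs{\log(n_{j}/n_{k})}\le c\eta$. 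Provided $c\eta$ is small, I would translate this logarithmic condition into the Euclidean one: $\abs{n_{j}-n_{k}}\ll\eta\min(n_{j},n_{k})\le\eta N$, using $n_{j},n_{k}\le N$, and absorb the implicit constant by rescaling $\eta$ (or by weakening the conclusion by an absolute factor).

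The last step is an AM--GM estimate,
\[
	\sum_{\substack{j,k\\ \abs{n_{j}-n_{k}}\le\eta N}}\abs{a_{j}}\abs{a_{k}} \le \sum_{j=1}^{J}\chi(n_{j},\eta N)\abs{a_{j}}^{2},
\]
which, combined with the previous two displays, closes the argument. The main obstacle is producing the majorant $W$ with the three stated properties simultaneously: naive convolutions-with-a-bump cannot give both an exact majorant of $\mathbf{1}_{[T_{0},T_{0}+T]}$ and a compactly supported Fourier transform with $L^{1}$-norm $T+O(1/\eta)$, so one needs the sharp extremal construction of Selberg and Vaaler. The complementary regime, where $\eta$ is bounded below, can be dispatched trivially by Cauchy--Schwarz together with $\chi(n_{j},\eta N)\le J$, so the extremal majorant is really only needed for small $\eta$.
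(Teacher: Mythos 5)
The paper does not itself prove Theorem~\ref{MVT Dirichlet polynomials}; it quotes it verbatim as Theorem~2.1 of \cite{BrouckeDebruyne}, so there is no in-paper proof to compare against. That said, your band-limited majorant argument is a correct proof of the statement, and the core steps --- expanding $\int_{\R}W\,|S|^{2}$ into $\sum_{j,k}a_{j}\overline{a_{k}}\,\widehat{W}(\log n_{j}-\log n_{k})$, using $|\widehat{W}(\xi)|\le\widehat{W}(0)$ from non-negativity, passing from $|\log(n_{j}/n_{k})|\le c\eta$ to $|n_{j}-n_{k}|\le\eta N$, and the closing AM--GM step that produces $\sum_{j}\chi(n_{j},\eta N)|a_{j}|^{2}$ --- are all sound.

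Two smaller points deserve correction. First, you do not actually need the sharp Selberg--Vaaler extremal function: the conclusion only asks for $\ll(T+1/\eta)$, not $T+O(1/\eta)$, so a cruder majorant suffices. For instance, if $K_{\delta}\ge 0$ is a Fej\'er kernel normalised so that $\int K_{\delta}=1$ and $\widehat{K_{\delta}}$ is supported in $[-\delta,\delta]$, and $c_{0}=\int_{|u|\le1/\delta}K_{\delta}$ (an absolute constant), then $W=c_{0}^{-1}\bigl(\mathbf{1}_{[T_{0}-1/\delta,\,T_{0}+T+1/\delta]}\ast K_{\delta}\bigr)$ is non-negative, dominates $\mathbf{1}_{[T_{0},T_{0}+T]}$, has $\int W\ll T+1/\delta$, and has $\widehat{W}$ supported in $[-\delta,\delta]$; taking $\delta\asymp\eta$ is enough. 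Second, the handling of the ``complementary regime'' is not right as written: the inequality $\chi(n_{j},\eta N)\le J$ is an upper bound and cannot be used to lower-bound the right-hand side. What rescues the large-$\eta$ case (if one wanted to treat it separately) is the \emph{equality} $\chi(n_{j},\eta N)=J$ when $\eta\ge1$, valid because $1\le n_{j}\le N$ forces $|n_{j}-n_{k}|<N\le\eta N$; then the trivial Cauchy--Schwarz bound $\int_{T_{0}}^{T_{0}+T}|S|^{2}\le TJ\sum_{j}|a_{j}|^{2}$ matches the target. But in fact no case split is needed at all: if $\widehat{W}$ is supported in $[-\eta/2,\eta/2]$, then for $n_{j}\ge n_{k}$ one has $n_{j}-n_{k}\le N\min\bigl(1,e^{\eta/2}-1\bigr)\le\eta N$ for \emph{every} $\eta>0$, so the main argument is already uniform in $\eta$.
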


Estimating a sum over well-spaced points by an integral, one obtains the following discrete variant, see \cite[Theorem 2.2]{BrouckeDebruyne}.
\begin{theorem}
\label{discrete MVT T}
With the same notation as in Theorem \ref{MVT Dirichlet polynomials}, let $\delta>0$, and let $\mathcal{T} \subset [T_{0}+\delta/2, T_{0}+T-\delta/2]$ be a set of $\delta$-well-spaced points, meaning that $|t-t'| \ge \delta$ whenever $t, t'\in \mathcal{T}$ with $t\neq t'$. Then
\[
	\sum_{t\in\mathcal{T}}\abs{S(\I t)}^{2} \ll \biggl(T+\frac{1}{\eta}\biggr)\biggl(\log N + \frac{1}{\delta}\biggr)\sum_{j=1}^{J}\chi(n_{j}, \eta N)\abs{a_{j}}^{2}.
\]
\end{theorem}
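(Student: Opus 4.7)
The plan is to upgrade the continuous $L^{2}$ bound of Theorem \ref{MVT Dirichlet polynomials} to a discrete bound over $\mathcal{T}$ by means of Gallagher's lemma: for any continuously differentiable function $f$ on $[T_{0}, T_{0}+T]$ and any $\delta$-well-spaced subset $\mathcal{T}\subset[T_{0}+\delta/2, T_{0}+T-\delta/2]$,
\[
	\sum_{t\in\mathcal{T}}\abs{f(t)}^{2} \ll \frac{1}{\delta}\int_{T_{0}}^{T_{0}+T}\abs{f(u)}^{2}\dif u + \biggl(\int_{T_{0}}^{T_{0}+T}\abs{f(u)}^{2}\dif u\,\int_{T_{0}}^{T_{0}+T}\abs{f'(u)}^{2}\dif u\biggr)^{1/2}.
\]
The buffer $\delta/2$ at each endpoint is exactly what ensures that the disjoint $\delta/2$-neighborhoods around the points of $\mathcal{T}$ all lie inside $[T_{0},T_{0}+T]$, as needed in the standard proof (compare $\abs{f(t)}^{2}$ to $\delta^{-1}\int_{t-\delta/2}^{t+\delta/2}\abs{f}^{2}$ via the fundamental theorem of calculus, then apply Cauchy--Schwarz on the cross term).

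Applying this to $f(u)=S(\I u)$ leaves two integrals to estimate, both of them directly by Theorem \ref{MVT Dirichlet polynomials}. Writing $\Sigma \coloneqq \sum_{j=1}^{J}\chi(n_{j},\eta N)\abs{a_{j}}^{2}$, the first integral is bounded by
\[
	\int_{T_{0}}^{T_{0}+T}\abs{S(\I u)}^{2}\dif u \ll \biggl(T+\frac{1}{\eta}\biggr)\Sigma.
\]
For the second, observe that $S'(\I u) = -\I\sum_{j=1}^{J}a_{j}(\log n_{j})n_{j}^{-\I u}$ is itself a general Dirichlet polynomial of exactly the same shape as $S$, now with coefficients $a_{j}\log n_{j}$; since $0\le \log n_{j}\le \log N$, applying Theorem \ref{MVT Dirichlet polynomials} to $S'$ gives
\[
	\int_{T_{0}}^{T_{0}+T}\abs{S'(\I u)}^{2}\dif u \ll \biggl(T+\frac{1}{\eta}\biggr)(\log N)^{2}\Sigma.
\]

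Substituting both estimates into Gallagher's inequality collapses the square-root term to $(T+1/\eta)(\log N)\,\Sigma$, and altogether
\[
	\sum_{t\in\mathcal{T}}\abs{S(\I t)}^{2} \ll \frac{1}{\delta}\biggl(T+\frac{1}{\eta}\biggr)\Sigma + (\log N)\biggl(T+\frac{1}{\eta}\biggr)\Sigma = \biggl(T+\frac{1}{\eta}\biggr)\biggl(\log N+\frac{1}{\delta}\biggr)\Sigma,
\]
which is the desired bound. There is no genuine obstacle: the argument is the standard Gallagher-type passage from a continuous mean value to a discrete one, and everything is off-the-shelf once Theorem \ref{MVT Dirichlet polynomials} is in hand. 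The one thing worth flagging is that the factor $\log N$ appears only \emph{linearly} in the final bound — which matches the stated form — because the $(\log N)^{2}$ from differentiating $S$ is halved by the square root in Gallagher's inequality.
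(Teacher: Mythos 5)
Your proof is correct and takes the standard route — Gallagher's lemma to pass from continuous to discrete mean values, bounding the derivative term by applying Theorem \ref{MVT Dirichlet polynomials} with coefficients $a_j\log n_j$ — which is the same approach indicated by the paper (``estimating a sum over well-spaced points by an integral'') and used in the cited reference for this statement. The observation that the $(\log N)^2$ from the derivative halves to $\log N$ under the square root is exactly the key bookkeeping step, and you have it right.
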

We shall also need an estimate for the sum over complex numbers $s = \sigma + \I t$ with well-spaced imaginary parts, but for which the real parts need not be the same.
\begin{theorem}
\label{discrete MVT S}
Let $\alpha>0$, $\delta>0$, and $\mathcal{S}$ a set of complex numbers $s=\sigma+\I t$ satisfying
\[
	 \sigma \ge \alpha,  \quad T_{0}+\delta/2 \le t \le T_{0} + T-\delta/2,
\]	 
and with the property that $\forall s, s'\in\mathcal{S}$: $s\neq s' \implies \abs{\Im s -\Im s'} \ge \delta$.
 
Suppose that $N\ge2$, $N\le n_{1} \le \dotso \le n_{J} \le 2N$, and let $a_{j}$ $(j=1, \dotsc, J)$ be complex numbers. Write $S(s) = \sum_{j=1}^{J}a_{j}n_{j}^{-s}$. Then for every $\eta>0$
\[
	\sum_{s\in\mathcal{S}}\abs{S(s)}^{2} \ll \biggl(T+ \frac{1}{\eta}\biggr)\biggl(\log N + \frac{1}{\delta}\biggr)\sum_{j=1}^{J}\chi(n_{j}, \eta N)\frac{\abs{a_{j}}^{2}}{N^{2\alpha}}.
\]
\end{theorem}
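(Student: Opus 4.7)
The plan is to apply the subharmonic mean value inequality to $|S|^{2}$ at each point of $\mathcal{S}$, and then to reduce to the continuous case (Theorem~\ref{MVT Dirichlet polynomials}) via Fubini. Since $S$ is a finite Dirichlet polynomial and hence entire, the function $|S(z)|^{2}$ is subharmonic, so for each $s_{0} \in \mathcal{S}$ and any $r > 0$,
\[
|S(s_{0})|^{2} \le \frac{1}{\pi r^{2}} \iint_{|z - s_{0}| \le r} |S(z)|^{2} \, dA(z).
\]
I would take $r = \min(\delta/2,\, c/\log N)$ for a small absolute constant $c > 0$: the bound $r \le \delta/2$ guarantees that the disks around the points of $\mathcal{S}$ are pairwise disjoint (since the imaginary parts of points of $\mathcal{S}$ are $\delta$-separated), while $r = O(1/\log N)$ keeps $n_{j}^{2r} = O(1)$ uniformly for $n_{j} \in [N, 2N]$.

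Summing over $s_{0} \in \mathcal{S}$, the disjointness combined with the fact that the union of the disks lies inside the half-strip $\{\sigma \ge \alpha - r,\ T_{0} \le t \le T_{0} + T\}$ and Fubini give
\[
\sum_{s_{0} \in \mathcal{S}} |S(s_{0})|^{2} \le \frac{1}{\pi r^{2}} \int_{\alpha - r}^{\infty} \int_{T_{0}}^{T_{0} + T} |S(\sigma + \I t)|^{2} \, dt \, d\sigma.
\]
For each fixed $\sigma$ I would view $S(\sigma + \I t) = \sum_{j} (a_{j} n_{j}^{-\sigma}) n_{j}^{-\I t}$ as a Dirichlet polynomial in $t$ with coefficients $a_{j} n_{j}^{-\sigma}$, and apply Theorem~\ref{MVT Dirichlet polynomials} to get
\[
\int_{T_{0}}^{T_{0} + T} |S(\sigma + \I t)|^{2} \, dt \ll \biggl(T + \frac{1}{\eta}\biggr) \sum_{j=1}^{J} \chi(n_{j}, \eta N) \frac{|a_{j}|^{2}}{n_{j}^{2\sigma}}.
\]
Integrating the right-hand side in $\sigma$ over $[\alpha - r, \infty)$ produces the factor $n_{j}^{-2(\alpha - r)}/(2 \log n_{j})$, which is $\ll 1/(\log N \cdot N^{2\alpha})$ by the choice of $r$ together with $n_{j} \in [N, 2N]$.

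Assembling these bounds yields
\[
\sum_{s_{0} \in \mathcal{S}} |S(s_{0})|^{2} \ll \frac{T + 1/\eta}{r^{2} \log N} \sum_{j=1}^{J} \chi(n_{j}, \eta N) \frac{|a_{j}|^{2}}{N^{2\alpha}},
\]
and $1/(r^{2} \log N) \ll \log N + 1/\delta$ with the above choice of $r$. The main obstacle is the tension between the two requirements on $r$: disjointness of the disks forces $r \le \delta/2$, while the integrability of $\int_{\alpha - r}^{\infty} n_{j}^{-2\sigma}\, d\sigma$ relative to $N^{-2\alpha}$ forces $r = O(1/\log N)$. In the regime where $\delta$ is comparable to or exceeds $1/\log N$ both constraints are met and the argument closes cleanly; in the opposite regime $\delta \ll 1/\log N$ the subharmonic step alone is slightly lossy, and the missing factor of $1/\delta$ is recovered by a direct appeal to Theorem~\ref{discrete MVT T} applied to $S(\alpha + \I t)$, via a Gallagher-type comparison on the line $\sigma = \alpha$.
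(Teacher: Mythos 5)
Your subharmonic-mean-value approach is a genuinely different route from the paper's proof, which proceeds by Abel summation: the paper writes $S(s)=\int_{N^-}^{2N}x^{\alpha-\sigma}\dif S(\alpha+\I t,x)$, integrates by parts to obtain $\abs{S(s)}\ll\abs{S(\alpha+\I t)}+\frac{1}{N}\int_N^{2N}\abs{S(\alpha+\I t,x)}\dif x$, and then feeds both partial sums into Theorem \ref{discrete MVT T}. Your argument is clean and valid in the regime $\delta\gtrsim 1/\log N$ (which, incidentally, is the only regime the paper actually needs, since it applies the lemma with $\delta\asymp\log T$). But as stated, there is a real gap in the complementary regime $\delta\ll 1/\log N$.

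The problem is that your two constraints on $r$ are genuinely in tension. Insisting on disjointness forces $r\le\delta/2$, while controlling $n_j^{2r}$ and the $\sigma$-integral forces $r\ll 1/\log N$; when $\delta\ll 1/\log N$ you are left with $1/(r^2\log N)=4/(\delta^2\log N)$, which exceeds $1/\delta$ by the unbounded factor $1/(\delta\log N)$. Your proposed rescue---``a direct appeal to Theorem \ref{discrete MVT T} applied to $S(\alpha+\I t)$ via a Gallagher-type comparison on the line $\sigma=\alpha$''---does not close this gap as stated: the points of $\mathcal{S}$ sit at various heights $\sigma\ge\alpha$, not on the line $\sigma=\alpha$, so Theorem \ref{discrete MVT T} applied on that line says nothing about $\abs{S(\sigma+\I t)}$ for $\sigma>\alpha$. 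Any honest reduction of off-line values to on-line values is itself the crux of the lemma, and the natural way to do it is precisely the partial-summation step in the paper's proof---at which point the subharmonic detour has bought you nothing.

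There is, however, a small fix that keeps your argument self-contained and uniform in $\delta$: drop the disjointness requirement altogether. Take $r=c/\log N$ always, and observe that because the imaginary parts in $\mathcal{S}$ are $\delta$-separated, any point $z$ lies in at most $O(1+r/\delta)$ of the disks $D(s_0,r)$. This yields
\[
	\sum_{s_0\in\mathcal{S}}\abs{S(s_0)}^{2}\ll\frac{1+r/\delta}{r^{2}}\iint\abs{S}^{2}\dif A\ll\frac{1+r/\delta}{r^{2}\log N}\Bigl(T+\frac{1}{\eta}\Bigr)\sum_{j}\chi(n_j,\eta N)\frac{\abs{a_j}^{2}}{N^{2\alpha}},
\]
and with $r\asymp 1/\log N$ the prefactor is $\ll\log N+1/\delta$, exactly as required. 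With this modification your approach becomes a complete alternative proof; as written, the small-$\delta$ case is not handled.
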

\begin{proof}
Write 
\[
	S(s, x) = \sum_{N\le n_{j}\le x}\frac{a_{j}}{n_{j}^{s}}.
\]
Integrating by parts we get for $s\in \mathcal{S}$
\begin{align*}
	S(s) 	&= \int_{N-}^{2N}x^{\alpha-\sigma}\dif S(\alpha+\I t, x) = (2N)^{\alpha-\sigma}S(\alpha+\I t) + (\sigma-\alpha)\int_{N}^{2N}x^{\alpha-\sigma-1}S(\alpha+\I t, x)\dif x \\
		&\ll \abs{S(\alpha+\I t)} + \frac{1}{N}\int_{N}^{2N}\abs{S(\alpha+\I t, x)}\dif x.
\end{align*}
By Cauchy--Schwarz,
\[
	\abs{S(s)}^{2} \ll \abs{S(\alpha+\I t)}^{2} + \frac{1}{N}\int_{N}^{2N}\abs{S(\alpha+\I t, x)}^{2}\dif x.
\]
We now apply Theorem \ref{discrete MVT T} to the sums $S(\alpha+\I t, x)$ with coefficients $b_{j} = a_{j}n_{j}^{-2\alpha}$ if $N\le n_{j}\le x$, and $b_{j}=0$ else.
\end{proof}
\medskip

The second ingredient is a mean value theorem for the Beurling zeta function. Recently in \cite{BrouckeHilberdink}, the maximal rate of growth of the \emph{mean square} for a general class of Dirichlet series was established. Applying Theorem 1.1 and Theorem 1.1+ of \cite{BrouckeHilberdink} to $\zeta_{\MP}(s)$ and $-\zeta_{\MP}'(s)$ respectively, we obtain:
\begin{theorem}
\label{MVT zeta}
Let $\zeta_{\MP}(s)$ be the Beurling zeta function associated to a number system $(\MP, \MN)$ with $\theta$-well-behaved integers. Then
\[
	\frac{1}{T}\int_{0}^{T}\abs[2]{\zeta_{\MP}\Bigl(\frac{1+\theta}{2}+\I t\Bigr)}^{2}\dif t \ll \log T, 
	\quad \frac{1}{T}\int_{0}^{T}\abs[2]{\zeta'_{\MP}\Bigl(\frac{1+\theta}{2}+\I t\Bigr)}^{2}\dif t \ll (\log T)^{3}.
\]
\end{theorem}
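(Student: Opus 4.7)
The proof proceeds by recognizing both bounds as direct specializations of the general $L^{2}$-mean theorems of \cite{BrouckeHilberdink} (Theorems 1.1 and 1.1+) to the Dirichlet series $\zeta_{\MP}(s)$ and $-\zeta_{\MP}'(s)$ respectively. The plan is therefore to verify the hypotheses of those theorems in each case and read off the stated mean square estimate.

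For $\zeta_{\MP}(s) = \sum_{j} n_{j}^{-s}$, the summatory function of the coefficients is exactly $N_{\MP}(x) = Ax + O(x^{\theta})$, whose main term is $x$ times a constant (a polynomial in $\log x$ of degree $0$). Applying Theorem 1.1 of \cite{BrouckeHilberdink} on the ``midpoint'' line $\sigma = \frac{1+\theta}{2}$ yields the first bound, with polylogarithmic strength $\log T$. For $-\zeta_{\MP}'(s) = \sum_{j} (\log n_{j}) n_{j}^{-s}$, the summatory function is computed by partial summation against $N_{\MP}$:
\[
\sum_{n_{j} \le x} \log n_{j} = A x \log x - A x + O(x^{\theta} \log x),
\]
whose main term $x(A \log x - A)$ is a polynomial in $\log x$ of degree $1$. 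Theorem 1.1+ then produces the second bound, the two extra factors of $\log T$ being accounted for by the higher degree of the polynomial and by the logarithmic weight in the coefficients.

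If one wanted to prove the underlying mean square bound directly, I would begin from the exact representation
\[
\zeta_{\MP}(s) = \sum_{n_{j} \le X} \frac{1}{n_{j}^{s}} + \frac{A X^{1-s}}{s-1} + s\int_{X}^{\infty} \frac{E(u)}{u^{s+1}}\,\dif u - \frac{E(X)}{X^{s}},
\]
valid on $\Re s > \theta$ with $E(u) = N_{\MP}(u) - Au$, take $X$ of order $T$ so that the polar, boundary and tail terms become negligible after integration in $t$, split the Dirichlet polynomial into $O(\log X)$ dyadic blocks, and estimate each via Theorem \ref{MVT Dirichlet polynomials}. The bound $\chi(y, \lambda) \ll \lambda + y^{\theta}$, which is immediate from \eqref{well-behaved}, is what allows one to choose the spacing parameter $\eta$ optimally for the line $\sigma = \frac{1+\theta}{2}$ and collapse the MVT output to the correct order.

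The main obstacle is keeping the polylogarithmic strength sharp: a naive dyadic splitting or a crude Cauchy--Schwarz step easily inflates the $\log T$ to a small power of $T$. Avoiding such losses---and, in the derivative case, correctly accounting for the two extra factors of $\log T$---is precisely what the careful analysis of \cite{BrouckeHilberdink} accomplishes; once that machinery is in hand, the verification above is essentially bookkeeping.
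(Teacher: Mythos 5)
Your proposal is correct and takes essentially the same route as the paper: the paper states this result as a direct application of Theorems 1.1 and 1.1+ of \cite{BrouckeHilberdink} to $\zeta_{\MP}$ and $-\zeta_{\MP}'$, which is precisely what you do, and your partial-summation computation of $\sum_{n_j\le x}\log n_j$ is the correct hypothesis check (the paper performs this same computation explicitly for the more general setting in the run-up to \eqref{second moment F}). The additional sketch of a direct proof via \eqref{well-behaved} and Theorem \ref{MVT Dirichlet polynomials} is sensible but not what the paper does here.
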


%---------------------------------------------------------------------------------------------------------------------------------------%
\section{Proof of the zero-density estimate}
\label{sec: proof}

We now prove Theorem \ref{zero-density theorem}, following the classical procedure from \cite[Chapter 12]{Montgomery}. In what follows, the implicit constants in the $O$-estimates may depend on the number system $(\MP, \MN)$, but are independent of $\alpha$.
\subsection{The setup}
Let $ \frac{1+\theta}{2} < \alpha < 1$ and $T\ge 2$. We let $2 \le X \le Y \le T^{O(1)}$ be parameters to be determined, and we set 
\[
	M_{X}(s) = \sum_{n_{j}\le X}\frac{\mu(n_{j})}{n_{j}^{s}}.
\]
Here $\mu$ is the M\"obius function associated with the system $(\MP, \MN)$: $\mu(n_{j}) = (-1)^{\nu}$ if $n_{j}$ is the product of $\nu$ distinct generalized primes, and $\mu(n_{j})=0$ otherwise, i.e.\ if $n_{j}$ is not squarefree\footnote{Strictly speaking $\mu$ should be defined as a function of $j$ instead of $n_{j}$, as unique factorization of Beurling integers need not hold. For example, we can have $\mu(p_{1}p_{3}) = 1$, $\mu(p_{2}^{2})=0$, even if $p_{1}p_{3}$ and $p_{2}^{2}$ happen to have the same numerical value. We will continue this slight abuse of notation, as sums over $\mu(n_{j})$ are always well-defined.}. Then
\[
	\zeta_{\MP}(s)M_{X}(s) = \sum_{j=1}^{\infty}\frac{a_{j}}{n_{j}^{s}},
\]
where $a_{1} = 1$, $a_{j} = 0$ for $1 < n_{j} \le X$, and $|a_{j}| \le d(n_{j})$ for $n_{j} > X$. Here $d(n_{j}) = \#\{(m, l): n_{m}n_{l} = n_{j}\}$ denotes the divisor function of the system $(\MP, \MN)$.

Using the inverse Mellin transform of the Gamma function
\[
	\e^{-1/x} = \frac{1}{2\pi\I}\int_{\kappa-\I\infty}^{\kappa+\I\infty}\Gamma(w)x^{w}\dif w, \quad \kappa > 0,
\]
we obtain (with $s=\sigma+\I t$)
\begin{equation}
\label{before contour switching}
	\e^{-1/Y} + \sum_{n_{j}>X}\frac{a_{j}\e^{-n_{j}/Y}}{n_{j}^{s}} = \frac{1}{2\pi\I}\int_{\kappa-\I\infty}^{\kappa+\I\infty}\zeta_{\MP}(s+w)M_{X}(s+w)\Gamma(w)Y^{w}\dif w, \quad \kappa > 1-\sigma.
\end{equation}
We assume that $\alpha \le \sigma < 1$, and shift the contour of integration to the line $\Re w = \frac{1+\theta}{2}-\sigma$. As $\zeta_{\MP}$ is polynomially bounded, this is allowed in view of the exponential decay in vertical strips of the Gamma function. We pick up two residues at $w = 1-s$ and $w = 0$ to get
\begin{multline*}
	\e^{-1/Y} + \sum_{n_{j}>X}\frac{a_{j}\e^{-n_{j}/Y}}{n_{j}^{s}} = AM_{X}(1)\Gamma(1-s)Y^{1-s} + \zeta_{\MP}(s)M_{X}(s)\\
	 + \frac{1}{2\pi}\int_{-\infty}^{\infty}\zeta_{\MP}\Bigl(\frac{1+\theta}{2}+\I(t+v)\Bigr)M_{X}\Bigl(\frac{1+\theta}{2}+\I(t+v)\Bigr)\Gamma\Bigl(\frac{1+\theta}{2}-\sigma+\I v\Bigr)Y^{\frac{1+\theta}{2}-\sigma+\I v}\dif v .
\end{multline*}

Let now $s = \rho = \beta + \I\gamma$ be a zero of $\zeta_{\MP}$ with $\beta \ge \alpha$ and $|\gamma| \le T$. Then 
\begin{multline}
	\e^{-1/Y} + \sum_{n_{j}>X}\frac{a_{j}\e^{-n_{j}/Y}}{n_{j}^{\rho}} = AM_{X}(1)\Gamma(1-\rho)Y^{1-\rho} \\
	+ \frac{1}{2\pi}\int_{-\infty}^{\infty}\zeta_{\MP}\Bigl(\frac{1+\theta}{2}+\I(\gamma+v)\Bigr)M_{X}\Bigl(\frac{1+\theta}{2}+\I(\gamma+v)\Bigr)\Gamma\Bigl(\frac{1+\theta}{2}-\beta+\I v\Bigr)Y^{\frac{1+\theta}{2}-\beta+\I v}\dif v.
	\label{6 terms}
\end{multline}
The first term on the right hand side of \eqref{6 terms} is in absolute value at most $1/6$, provided that $|\gamma| \ge C_{1}\log T$ for a suitable constant $C_{1}$. Also, setting $Z=C_{2}\log T$ for a sufficiently large constant $C_{2}$, we get
\[
	\abs{\frac{1}{2\pi}\int_{|v|\ge Z}\zeta_{\MP}\Bigl(\frac{1+\theta}{2}+\I(\gamma+v)\Bigr)M_{X}\Bigl(\frac{1+\theta}{2}+\I(\gamma+v)\Bigr)\Gamma\Bigl(\frac{1+\theta}{2}-\beta+\I v\Bigr)Y^{\frac{1+\theta}{2}-\beta+\I v}\dif v} \le \frac{1}{6}.
\] 
Both estimates are due to the exponential decay of $\Gamma$. Furthermore, if $Y$ is sufficiently large, we have
\[
	\abs[3]{\sum_{n_{j} > Y^{2}}\frac{a_{j}\e^{-n_{j}/Y}}{n_{j}^{\rho}}} \le \frac{1}{6} \quad \text{and} \quad \e^{1/Y} \ge \frac{5}{6}.
\]
We conclude that for a zero $\rho=\beta+\I\gamma$ with $\beta\ge\alpha$ and $C_{1}\log T \le |\gamma| \le T$ we either have
\begin{equation}
\label{type I}
	\abs[3]{\sum_{X<n_{j}\le Y^{2}}\frac{a_{j}\e^{-n_{j}/Y}}{n_{j}^{\rho}}} \ge \frac{1}{6},
\end{equation} 
or 
\begin{equation}
\label{type II}
	\abs{\int_{-Z}^{Z}\zeta_{\MP}\Bigl(\frac{1+\theta}{2}+\I(\gamma+v)\Bigr)M_{X}\Bigl(\frac{1+\theta}{2}+\I(\gamma+v)\Bigr)\Gamma\Bigl(\frac{1+\theta}{2}-\beta+\I v\Bigr)Y^{\frac{1+\theta}{2}-\beta+\I v}\dif v} \ge \frac{2\pi}{6}.
\end{equation}

From the set of all such $\zeta_{\MP}$-zeros, we select a subset $\mathcal{R}$ of $3Z$-well-spaced zeros (meaning that $|\gamma-\gamma'| \ge 3Z$ if $\beta+\I\gamma$ and $\beta'+\I\gamma'$ are distinct elements from $\mathcal{R}$) in such a way that 
\[
	N(\alpha, T) \ll (\log T)^{2} + |\mathcal{R}|(\log T)^{2}.
\]
This is possible in view of Lemma \ref{local number of zeros} and the trivial estimate $N(\frac{1+\theta}{2}, C_{1}\log T) \ll (\log T)^{2}$. Let $\mathcal{R}_{1}$ and $\mathcal{R}_{2}$ be the subsets of $\mathcal{R}$ for which \eqref{type I} and \eqref{type II} hold respectively, and denote their respective sizes by $R_{1}$ and $R_{2}$.

\subsection{The estimation of $R_{1}$}
To estimate $R_{1}$, we decompose each sum satisfying \eqref{type I} into $O(\log Y) = O(\log T)$ dyadic subsums. By the pigeonhole principle, there is a number $N$, $X\le N\le Y^{2}$, such that 
\[
	\abs[3]{\sum_{N<n_{j}\le 2N}\frac{a_{j}\e^{-n_{j}/Y}}{n_{j}^{\rho}}} \gg \frac{1}{\log T}
\]
holds for $\gg R_{1}(\log T)^{-1}$ zeros $\rho\in\mathcal{R}_{1}$. Hence we have
\begin{equation}
\label{R1 prelim}
	R_{1} \ll (\log T)^{3}\sum_{\rho\in\mathcal{R}_{1}}\abs[3]{\sum_{N<n_{j}\le 2N}\frac{a_{j}\e^{-n_{j}/Y}}{n_{j}^{\rho}}}^{2}.
\end{equation}
We now apply Theorem \ref{discrete MVT S} with $\eta = N^{\theta-1}$. In view of the $\theta$-well-behavedness \eqref{well-behaved}, $\chi(n_{j}, N^{\theta}) \ll N^{\theta}$ for $N < n_{j} \le 2N$. This yields
\begin{align*}
	R_{1}	&\ll (\log T)^{3}(T+N^{1-\theta})(\log T)\sum_{N<n_{j}\le 2N}N^{\theta}\frac{|a_{j}|^{2}\e^{-2n_{j}/Y}}{N^{2\alpha}} \\
			&\ll (TN^{1+\theta-2\alpha} + N^{2-2\alpha})(\log T)^{7}\e^{-2N/Y}.
\end{align*} 
In the last step we used the estimate $\sum_{n_{j}\le x}d(n_{j})^{2} \ll x(\log x)^{3}$, which follows from \cite[Proposition 4.4.1]{Knopfmacher} (in fact an asymptotic is shown there). As $N \ge X$, we conclude that 
\begin{equation}
\label{R1}
	R_{1} \ll (TX^{1+\theta-2\alpha} + Y^{2-2\alpha})(\log T)^{7}.
\end{equation}

\subsection{The estimation of $R_{2}$}
Let us now treat the zeros in $\mathcal{R}_{2}$. For each zero $\rho = \beta+\I\gamma$ satisfying \eqref{type II}, let $t_{\rho}$ be a real number with $|\gamma-t_{\rho}| \le Z$ for which $\abs[1]{\zeta_{\MP}\bigl(\frac{1+\theta}{2}+\I t_{\rho}\bigr)M_{X}\bigl(\frac{1+\theta}{2}+\I t_{\rho}\bigr)}$ is maximal on $[\gamma-Z, \gamma+Z]$. We now also assume that $\alpha \ge \frac{1+\theta}{2} + (\log T)^{-1}$, which we may, as our zero-density estimate follows from the trivial estimate $N(\alpha, T)\ll T\log T$ in the range $\alpha <  \frac{1+\theta}{2} + (\log T)^{-1}$. With this restriction on $\alpha$, 
\[
	\int_{-Z}^{Z}\abs[2]{\Gamma\Bigl(\frac{1+\theta}{2}-\beta+\I v\Bigr)}\dif v \ll \log T, \quad \text{whenever } \beta\ge \alpha.
\]
Hence, \eqref{type II} implies that for each $\rho\in \mathcal{R}_{2}$, 
\begin{equation}
\label{lower bound zetaM}
	\abs[2]{\zeta_{\MP}\Bigl(\frac{1+\theta}{2}+\I t_{\rho}\Bigr)M_{X}\Bigl(\frac{1+\theta}{2}+\I t_{\rho}\Bigr)} \gg Y^{\beta-\frac{1+\theta}{2}}(\log T)^{-1},
\end{equation}
from which we get that 
\begin{equation}
\label{R2 prelim}
	R_{2} \ll Y^{\frac{1+\theta}{2}-\alpha}(\log T)\sum_{\rho\in \mathcal{R}_{2}}\abs[2]{\zeta_{\MP}\Bigl(\frac{1+\theta}{2}+\I t_{\rho}\Bigr)M_{X}\Bigl(\frac{1+\theta}{2}+\I t_{\rho}\Bigr)}.
\end{equation}

From the $3Z$-well-spacedness of the numbers $\gamma$, it follows that the $t_{\rho}$ are $Z$-well-spaced. This allows us to estimate the sum of $\zeta_{\MP}$-values by an integral by an elementary lemma of Gallagher (see e.g.\ \cite[Lemma 1.2]{Montgomery}):
\begin{align}
	\sum_{\rho\in\mathcal{R}_{2}}\abs[2]{\zeta_{\MP}\Bigl(\frac{1+\theta}{2}+\I t_{\rho}\Bigr)}^{2} 
	&\ll \int_{-T-Z}^{T+Z}\biggl(\,\abs[2]{\zeta_{\MP}\Bigl(\frac{1+\theta}{2}+\I t\Bigr)}^{2} + \abs[2]{\zeta_{\MP}\Bigl(\frac{1+\theta}{2}+\I t\Bigr)\zeta'_{\MP}\Bigl(\frac{1+\theta}{2}+\I t\Bigr)}\biggr)\dif t \nonumber\\
	&\ll	T(\log T)^{2}, \label{sum zeta}
\end{align}
where in the last step we used Theorem \ref{MVT zeta} and Cauchy--Schwarz. 

For $M_{X}$, we perform a dyadic splitting and write $M_{X}(s) = 1 + \sum_{l=0}^{L}D_{l}(s)$, where $L = \lfloor\frac{\log X}{\log 2}\rfloor \ll \log T$ and
\[
	D_{l}(s) = \sum_{2^{l}<n_{j}\le 2^{l+1}}\frac{\mu(n_{j})}{n_{j}^{s}}, \quad l=0,1,\dotsc, L-1, \quad D_{L}(s) = \sum_{2^{L}< n_{j}\le X}\frac{\mu(n_{j})}{n_{j}^{s}}.
\]
Hence by Cauchy--Schwarz,
\begin{equation}
\label{dyadic subsums M}
	\sum_{\rho\in\mathcal{R}_{2}}\abs[2]{M_{X}\Bigl(\frac{1+\theta}{2}+\I t_{\rho}\Bigr)}^{2} 
	\ll R_{2} + \log T\sum_{l=0}^{L}\sum_{\rho\in\mathcal{R}_{2}}\abs[2]{D_{l}\Bigl(\frac{1+\theta}{2}+\I t_{\rho}\Bigr)}^{2}.
\end{equation}

We estimate the inner sum by Theorem \ref{discrete MVT T}, with $\eta = (2^{l+1})^{\theta-1}$, for which $\chi(n_{j}, \eta 2^{l+1}) \ll (2^{l})^{\theta}$ (when $2^{l} < n_{j}\le 2^{l+1})$. This yields
\begin{align*}
	\sum_{\rho\in\mathcal{R}_{2}}\abs[2]{D_{l}\Bigl(\frac{1+\theta}{2}+\I t_{\rho}\Bigr)}^{2} 
		&\ll \bigl(T+(2^{l})^{1-\theta}\bigr)(\log T)\sum_{2^{l}<n_{j}\le 2^{l+1}}(2^{l})^{\theta} \frac{1}{(2^{l})^{1+\theta}} \\
		&\ll \bigl(T+(2^{l})^{1-\theta}\bigr)(\log T).
\end{align*}
We conclude in view of the trivial estimate $R_{2}\le T$ that
\begin{equation}
\label{sum MX}
	\sum_{\rho\in\mathcal{R}_{2}}\abs[2]{M_{X}\Bigl(\frac{1+\theta}{2}+\I t_{\rho}\Bigr)}^{2} \ll T + (\log T)^{2}\sum_{l=0}^{L}\bigl(T+(2^{l})^{1-\theta}\bigr) \ll (T+ X^{1-\theta})(\log T)^{3}.
\end{equation}

Inserting the estimates \eqref{sum zeta} and \eqref{sum MX} in \eqref{R2 prelim} after another application of Cauchy--Schwarz yields
\begin{equation}
\label{R2}
	R_{2} \ll Y^{\frac{1+\theta}{2}-\alpha}\bigl(T+(TX^{1-\theta})^{1/2}\bigr)(\log T)^{7/2}.
\end{equation}

\subsection{Conclusion}
To conclude the proof of Theorem \ref{zero-density theorem}, we take $X = T^{\frac{1}{1-\theta}}$ and $Y = T^{\frac{2}{3-2\alpha-\theta}}$. Note that $X < Y$ and $\log Y \ll \log T$ uniformly in $\alpha$. Substituting these values in \eqref{R1} and \eqref{R2} yields
\[
	|\mathcal{R}| \le R_{1} + R_{2} \ll T^{\frac{4(1-\alpha)}{3-2\alpha-\theta}}(\log T)^{7},
\]
from which we finally obtain \eqref{main zero-density estimate}.
\begin{remark}
In \cite{BrouckeHilberdink}, also the (sharp) upper bounds $\frac{1}{T}\int_{0}^{T}\abs{\zeta_{\MP}(\sigma_{0}+\I t)}^{2}\dif t \ll T^{\frac{1+\theta-2\sigma_{0}}{1-\theta}}$, $\frac{1}{T}\int_{0}^{T}\abs{\zeta'_{\MP}(\sigma_{0}+\I t)}^{2}\dif t \ll T^{\frac{1+\theta-2\sigma_{0}}{1-\theta}}(\log T)^{2}$ for $\theta < \sigma_{0} < \frac{1+\theta}{2}$ were established. One can check that shifting to the line $\Re w = \sigma_{0}-\sigma$ instead of $\Re w = \frac{1+\theta}{2}-\sigma$ in \eqref{before contour switching} and using the above second moment estimates leads to inferior results.
\end{remark}

%---------------------------------------------------------------------------------------------------------------------------------------%
\section{Proof of Theorem \ref{general theorem}}
\label{sec: general}
The proof of the more general Theorem \ref{general theorem} follows without much effort along the same lines as in Section \ref{sec: proof}. We recall our setup: $(n_{j})_{j\ge1}$ is a sequence of Beurling integers satisfying $N(x+x^{\theta})-N(x-x^{\theta}) \ll_{\eps} x^{\theta+\eps}$, $F(s) = \sum_{j\ge1}a_{j}n_{j}^{-s}$ is a Dirichlet series with non-negative coefficients satisfying $a_{1}=1$, $a_{j}\ll_{\eps} n_{j}^{\eps}$, and $A(x) = xP(\log x) + O(x^{\theta}(\log x)^{q})$, and $G(s) = 1/F(s) = \sum_{j\ge1}b_{j}n_{j}^{-s}$ satisfies $\abs{b_{j}}\ll_{\eps}n_{j}^{\eps}$.
\begin{remark} 
The sequence of Beurling integers $(n_{j})_{j}$ is not necessarily strictly increasing and can have repeated values. In this situation, there is not a unique choice of coefficients $a_{j}$. For Theorem \ref{general theorem} it suffices that there is some choice of coefficients $a_{j}$ satisfying $a_{j}\ll_{\eps} n_{j}^{\eps}$. The same remark applies to $G$. Hence, if the integer $n_{j_{0}}$ has multiplicity $m_{j_{0}}$ (which can be up to $n_{j_{0}}^{\theta+\eps}$ on our assumptions), we can allow following bound for the ``total coefficient'': $\sum_{j: n_{j}=n_{j_{0}}}a_{j}\ll_{\eps} m_{j_{0}}n_{j_{0}}^{\eps}$, for every $\eps>0$.
\end{remark}

Note that $N(x+x^{\theta}) - N(x-x^{\theta}) \ll_{\eps} x^{\theta+\eps}$ implies that $N(x) \ll_{\eps} x^{1+\eps}$. In combination with the Ramanujan condition $\abs{b_{j}} \ll_{\eps} n_{j}^{\eps}$, this implies that $G(s)$ converges absolutely for $\Re s > 1$ and is analytic there. The absolute convergence of $F(s)$ for $\Re s > 1$ already followed from \eqref{A(x)}. From $F(s)G(s)=1$ it follows that $F$ has no zeros for $\Re s > 1$.

\subsection{Auxiliary lemmas}
We begin by establishing a bound for the local number of zeros of $F$ as in Lemma \ref{local number of zeros}. It results from combining Jensen's formula with a polynomial bound for $F(s)$.
\begin{lemma}
\label{polynomial bound}
Suppose $F(s) = \sum_{j}a_{j}n_{j}^{-s}$ has non-negative coefficients $a_{j}$ whose counting function $A(x)$ satisfies \eqref{A(x)}. Then for every $\delta>0$ we have uniformly for $\theta+\delta \le \sigma \le 1-\delta$, $\abs{t}\ge 2$:
\[
	F(\sigma+\I t) \ll_{\delta} \abs{t}^{\frac{1-\sigma}{1-\theta}}(\log \abs{t})^{\max\{d, q\}},
\]
and uniformly for $\sigma \ge \frac{1+\theta}{2}$, $\abs{t}\ge 2$:
\[
	F(\sigma+\I t) \ll \abs{t}^{\frac{1-\sigma}{1-\theta}}(\log \abs{t})^{\max\{d+1, q\}} + (\log \abs{t})^{d+1}.
\]
\end{lemma}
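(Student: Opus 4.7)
The plan is to establish both bounds via the standard truncate-and-continue technique: start from the Mellin--Stieltjes representation
\[
	F(s) = \int_{1^{-}}^{\infty}x^{-s}\dif A(x), \qquad \Re s > 1,
\]
split at a parameter $X\ge 2$ as
\[
	F(s) = \sum_{n_{j}\le X}\frac{a_{j}}{n_{j}^{s}} + \int_{X}^{\infty}x^{-s}\dif A(x),
\]
bound each piece, and finally choose $X$ to balance them. Since $a_{j}\ge 0$, the short sum is controlled in absolute value by $\sum_{n_{j}\le X}a_{j}n_{j}^{-\sigma}$; partial summation using $A(x)=xP(\log x)+O(x^{\theta}(\log x)^{q})$ reduces this to the size of $\int_{1}^{X}(\log u)^{d}u^{-\sigma}\dif u$ (plus a negligible $\int_{1}^{X}u^{\theta-\sigma-1}(\log u)^{q}\dif u$, which is $O((\log X)^{q})$ since $\sigma-\theta\ge (1-\theta)/2$). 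For $\theta+\delta\le\sigma\le 1-\delta$ the first integral is $O_{\delta}(X^{1-\sigma}(\log X)^{d})$, whereas uniformly for $\sigma\ge (1+\theta)/2$ one has to include the critical point $\sigma=1$, which costs an extra logarithm and gives the weaker bound $O(X^{1-\sigma}(\log X)^{d+1}+(\log X)^{d+1})$.

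To continue and bound the tail, substitute $A(x)=xP(\log x)+R(x)$ and split the integral accordingly. The smooth part $\int_{X}^{\infty}x^{-s}\,\dif(xP(\log x))=\int_{X}^{\infty}x^{-s}Q(\log x)\dif x$, with $Q=P+P'$ of degree $d$, is computed by iterated integration by parts (or via the incomplete gamma function after the substitution $y=\log x$) and yields the explicit meromorphic expression
\[
	\int_{X}^{\infty}x^{-s}(\log x)^{k}\dif x
	= \sum_{j=0}^{k}\frac{k!}{(k-j)!}\,\frac{X^{1-s}(\log X)^{k-j}}{(s-1)^{j+1}}.
\]
Since $|s-1|\ge|t|\ge 2$ and $\log X\ge 1$, the $j=0$ term dominates, giving a contribution of $O\bigl(X^{1-\sigma}(\log X)^{d}/|t|\bigr)$. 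The rough part integrates by parts to
\[
	\int_{X}^{\infty}x^{-s}\dif R(x) = -X^{-s}R(X) + s\int_{X}^{\infty}R(x)x^{-s-1}\dif x,
\]
where the boundary term at infinity vanishes for $\Re s>\theta$, and the bound $R(x)\ll x^{\theta}(\log x)^{q}$ together with $\sigma-\theta\ge (1-\theta)/2$ gives a contribution of $O\bigl(|t|X^{\theta-\sigma}(\log X)^{q}\bigr)$.

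Combining everything yields, in the range $\sigma\ge \frac{1+\theta}{2}$,
\[
	|F(s)| \ll X^{1-\sigma}(\log X)^{d+1} + (\log X)^{d+1} + |t|X^{\theta-\sigma}(\log X)^{q}.
\]
The optimal balance comes from $X^{1-\theta}\asymp |t|$, so I take $X=|t|^{1/(1-\theta)}$; then $X^{1-\sigma}=|t|X^{\theta-\sigma}=|t|^{(1-\sigma)/(1-\theta)}$ and $\log X\asymp\log|t|$, producing
\[
	|F(s)| \ll |t|^{\frac{1-\sigma}{1-\theta}}(\log|t|)^{\max\{d+1,q\}} + (\log|t|)^{d+1}.
\]
The same substitution applied to the tighter short-sum bound valid for $\theta+\delta\le\sigma\le 1-\delta$ gives the first stated inequality (the additive $(\log|t|)^{d+1}$ is absorbed by $|t|^{(1-\sigma)/(1-\theta)}\ge|t|^{\delta/(1-\theta)}$).

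The main obstacle is bookkeeping the logarithmic exponents — in particular the extra log picked up by $\int_{1}^{X}u^{-\sigma}(\log u)^{d}\dif u$ as $\sigma$ passes through $1$, which is precisely what forces $d+1$ rather than $d$ in the second bound. The rest is a routine but careful integration-by-parts exercise, and meromorphic continuation to $\Re s>\theta$ is built in, since each of the explicit terms above is defined on that half-plane.
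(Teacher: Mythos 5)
Your proof is correct and takes essentially the same route as the paper: split $F(s)$ into a short sum over $n_j\le X$ and a tail integral, decompose the tail into the smooth part $\int_X^\infty x^{-s}\,\dif(xP(\log x))$ and the error part $\int_X^\infty x^{-s}\dif E(x)$, handle the smooth part by iterated integration by parts (your explicit formula collects exactly into the paper's $x^{1-s}\sum_j Q_j(\log x)(1-s)^{-j}$ with $\deg Q_j\le d+1-j$), integrate the error part by parts once, bound the short sum via $a_j\ge 0$ and partial summation, and balance with $X=|t|^{1/(1-\theta)}$. The only stylistic difference is that you spell out the iterated integration by parts and the bookkeeping of the extra logarithm near $\sigma=1$ explicitly, whereas the paper states them compactly; the logic and choice of parameters are identical.
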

\begin{proof}
Writing $A(x) = xP(\log x) + E(x)$ with $\deg P = d$ and $E(x) \ll x^{\theta}(\log x)^{q}$, we have for $\sigma>1$ and fixed $x>1$,
\begin{align*}
	F(s) 	&= \sum_{n_{j}\le x}\frac{a_{j}}{n_{j}^{s}} + \int_{x}^{\infty}\frac{\dif\,\bigl(uP(\log u)\bigr)}{u^{s}} + \int_{x}^{\infty}\frac{\dif E(u)}{u^{s}}\\
		&= \sum_{n_{j}\le x}\frac{a_{j}}{n_{j}^{s}} + x^{1-s}\biggl(\frac{Q_{1}(\log x)}{1-s} + \dotsb + \frac{Q_{d+1}(\log x)}{(1-s)^{d+1}}\biggr) - \frac{E(x)}{x^{s}} + s\int_{x}^{\infty}\frac{E(u)}{u^{s+1}}\dif u.
\end{align*}
Here, the $Q_{j}$ are certain polynomials of degree at most $d+1-j$. For fixed $x$, the above expression has analytic continuation to $\Re s > \theta$ with the exception of $s=1$. The result follows from inserting the bounds
\[
	\abs[3]{\sum_{n_{j}\le x}\frac{a_{j}}{n_{j}^{s}}} \le \sum_{n_{j}\le x}\frac{a_{j}}{n_{j}^{\sigma}} \ll \frac{x^{1-\sigma}-1}{1-\sigma}(\log x)^{d}, \quad E(x) \ll x^{\theta}(\log x)^{q}	
\]
and by selecting $x = \abs{t}^{\frac{1}{1-\theta}}$. Note that we may assume $\abs{s} \ll \abs{t}$ as the result is trivial for $\sigma > 2$.
\end{proof}

\begin{lemma}
\label{local zeros F}
With the same assumptions as in the previous lemma, we have the local bound
\[
	N(F; \alpha, T+1) - N(F; \alpha, T) \ll_{\alpha} \log T.
\]
\end{lemma}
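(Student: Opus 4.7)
The lemma is the standard application of Jensen's formula combined with the polynomial growth bound of Lemma~\ref{polynomial bound}. By symmetry (replacing $t$ by $-t$) it suffices to bound the number of zeros $\rho = \beta + \I \gamma$ of $F$ with $\beta \geq \alpha$ and $\gamma \in [T, T+1]$.

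Fix a constant $c > 2$ large enough (depending on $\alpha$) that $\sqrt{(c-\alpha)^{2}+1/4} < c-\theta$; this is equivalent to $(\alpha-\theta)(2c-\alpha-\theta) > 1/4$, so any $c > (\alpha+\theta)/2 + 1/(8(\alpha-\theta))$ works. Set $s_{0} = c + \I(T+1/2)$. Using $FG = 1$ on $\Re s > 1$ together with the Ramanujan bound $|b_{j}| \ll_{\eps} n_{j}^{\eps}$ and the consequence $N(x) \ll_{\eps} x^{1+\eps}$ of the hypothesis on $N$, a partial summation shows $|G(s_{0})| \leq 1 + \sum_{j \geq 2}|b_{j}|/n_{j}^{c} = O(1)$, so $|F(s_{0})| \gg 1$. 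Choose radii $r, R$ with $\sqrt{(c-\alpha)^{2}+1/4} < r < R < c-\theta$; the small disk $|s - s_{0}| \leq r$ then contains every zero we wish to count, while the large disk $|s-s_{0}|\leq R$ is contained in the strip $\Re s > \theta$ and sits far from the pole $s=1$. On the circle $|s-s_{0}|=R$ the real part lies in $[c-R,\, c+R] \subset (\theta, \infty)$, so Lemma~\ref{polynomial bound} yields $|F(s)| \ll_{\alpha} T^{(1-(c-R))/(1-\theta)}(\log T)^{O(1)}$, i.e.\ a fixed power of $T$ with exponent strictly less than $1$.

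Jensen's formula applied to the analytic function $F$ on the disk $|s - s_{0}| \leq R$ now gives the bound $\log(M/|F(s_{0})|)/\log(R/r) \ll_{\alpha} \log T$ for the number of zeros in $|s - s_{0}| \leq r$, which proves the lemma. The only subtlety is the geometric packaging: one needs a single disk whose centre lies in the region of absolute convergence (to control $|F(s_{0})|$ from below), that contains the rectangle of interest, and that is still contained in the domain of analytic continuation $\Re s > \theta$ where Lemma~\ref{polynomial bound} applies. This forces $c$ to be chosen large in terms of $\alpha$, and the factor $1/\log(R/r)$, which shrinks as $\alpha \to \theta$, accounts for the $\alpha$-dependence of the implied constant.
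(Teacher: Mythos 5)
Your proof is correct and follows essentially the same route as the paper's: Jensen's formula applied on a disk whose centre lies in the half-plane of absolute convergence (so that $|F|$ is bounded below via $|G| = 1/|F|$), with the upper bound on the circle supplied by Lemma~\ref{polynomial bound}. The only cosmetic difference is that the paper fixes the centre at $\sigma = 2$ (implicitly covering the rectangle of zeros by boundedly many such disks), whereas you shift the centre abscissa to a constant $c(\alpha)$ so that a single disk suffices; both choices lead to the same $\alpha$-dependent implied constant.
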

\begin{proof}
It suffices to show that for large $T$ and for every $\delta>0$ and $0 < r \le 2-\theta-\delta$, the number of zeros of $F$ in the disk $\abs{s-2-\I T}\le r$ is $O_{\delta}(\log T)$. Let $\rho_{1}, \dotsc, \rho_{K}$ be an enumeration of the zeros of $F$ in this disk, and set $r_{k} = \abs{\rho_{k}-2-\I T}$. Setting $R = r+\delta/2$, it follows from Jensen's formula that
\[
	\log\frac{R}{r_{1}} + \dotsb + \log\frac{R}{r_{K}} \le \frac{1}{2\pi}\int_{0}^{2\pi}\log\abs[1]{F(2+\I T+R\e^{\I\theta})}\dif\theta + \log\frac{1}{\abs{F(2+\I T)}}.
\]
The left hand side is $\gg_{\delta} K$, while the right hand side is $\le O_{\delta}(\log T)$, in view of Lemma \ref{polynomial bound} and 
\[
	\frac{1}{\abs{F(2+\I T)}} = \abs{G(2+\I T)} \le \sum_{j=1}^{\infty}\frac{\abs{b_{j}}}{n_{j}^{2}} < \infty.
\]
\end{proof}

The final preliminary result we require is the mean value estimate for $F$ and $F'$. Integrating by parts yields
\[
	\sum_{n_{j}\le x}a_{j}\log n_{j} = xP(\log x)\log x - \int_{1}^{x}P(\log u)\dif u + O\bigl(x^{\theta}(\log x)^{q+1}\bigr) = x\tilde{P}(\log x) + O\bigl(x^{\theta}(\log x)^{q+1}\bigr)
\]
for a certain polynomial $\tilde{P}$ of degree $d+1$. Therefore \cite[Theorem 1.1+]{BrouckeHilberdink} yields
\begin{equation}
\label{second moment F}
	\frac{1}{T}\int_{0}^{T}\abs[2]{F\Bigl(\frac{1+\theta}{2}+\I t\Bigr)}^{2}\dif t \ll (\log T)^{d+q+1}, 
	\quad \frac{1}{T}\int_{0}^{T}\abs[2]{F'\Bigl(\frac{1+\theta}{2}+\I t\Bigr)}^{2}\dif t \ll (\log T)^{d+q+3}.
\end{equation}

\subsection{Proof of Theorem \ref{general theorem}: required modifications}
Fix a small $\eps>0$. In the setup of the proof, we replace $M_{X}(s)$ by
\[
	G_{X}(s) = \sum_{n_{j}\le X}\frac{b_{j}}{n_{j}^{s}}
\]
and write $F(s)G_{X}(s) = \sum_{j\ge1}c_{j}n_{j}^{-s}$, where $c_{1}=1$, $c_{j}=0$ for $1<n_{j}\le X$, and $\abs{c_{j}} \ll_{\eps}n_{j}^{\eps}d(n_{j}) \ll_{\eps'}n_{j}^{\eps'}$. Here and in what follows, $\eps'$ denotes a (small) positive number, not necessarily the same at each occurrence, and which is at most a bounded multiple of $\eps$.

When switching contours in \eqref{before contour switching}, the residue at $w=1-s$ now not only involves the value of $G_{X}(s+w)\Gamma(w)Y^{w}$ at $w=1-s$, but also that of its derivatives up to the order $d$. Due to the exponential decay of $\Gamma$ and its derivatives, this residue, when $s= \rho=\beta+\I\gamma$ is a zero of $F$, is at most $1/6$ in absolute value if $\abs{\gamma} \ge C_{1}\log T$ for a suitable constant $C_{1}$. Due to Lemma \ref{local zeros F}, we may again select a suitable subset of a $3Z$-well-spaced zeros of $F$, and we similarly consider the subsets $\mathcal{R}_{1}$ and $\mathcal{R}_{2}$.

The estimation of $R_{1}$ can be done analogously to Section \ref{sec: proof}: starting from \eqref{R1 prelim} (with coefficients $c_{j}$ instead of $a_{j}$), the assumption $N(x+x^{\theta})-N(x-x^{\theta}) \ll_{\eps} x^{\theta+\eps}$ allows us to apply Theorem \ref{discrete MVT S} with $\eta = N^{\theta-1}$ and $\chi(n_{j}, N^{\theta}) \ll_{\eps} N^{\theta+\eps}$. We now simply bound $\sum_{N<n_{j}\le 2N}\abs{c_{j}}^{2}$ by $O_{\eps'}(N^{1+\eps'})$. Hence we obtain \eqref{R1}, with the modification of replacing the factor $(\log T)^{7}$ by $T^{\eps'}$.

The estimation of $R_{2}$ also goes through as before, using the second moment bounds \eqref{second moment F}. The dyadic subsums are now
\[
	D_{l}(s) = \sum_{2^{l}<n_{j}\le 2^{l+1}}\frac{b_{j}}{n_{j}^{s}},
\]
whose coefficients are $O_{\eps}(X^{\eps})$. Hence we arrive at \eqref{R2}, where $(\log T)^{7/2}$ is replaced by $T^{\eps'}$.

The selection of the parameters $X$ and $Y$ is unchanged, and combining the modified equations \eqref{R1} and \eqref{R2} we arrive at
\[
	N(F; \alpha, T) \ll_{\eps'} T^{\frac{4(1-\alpha)}{3-2\alpha-\theta}+\eps' }.
\]
\begin{remark}
\label{general coefficients}
The non-negativity of the coefficients $a_{j}$ and \eqref{A(x)} were used to obtain polynomial bounds for $F$ and the second moment estimates \eqref{second moment F}. These assumptions can be replaced by the following ones, without affecting the conclusion of Theorem \ref{general theorem}. Suppose $F(s) = 1 + \sum_{j=2}^{\infty}a_{j}n_{j}^{-s}$ is a general Dirichlet series with complex coefficients satisfying $\abs{a_{j}} \ll_{\eps} n_{j}^{\eps}$ for every $\eps>0$. Suppose $F(s)$ has analytic continuation, with the exception of a possible pole at $s=1$, to a half-plane $\Re s > \sigma_{f}$, $\sigma_{f}<\frac{1+\theta}{2}$, where it satisfies $F(\sigma+\I t) \ll_{\delta} \abs{t}^{O_{\delta}(1)}$ for $\sigma \ge \sigma_{f}+\delta$ and for every $\delta>0$, and the second moment estimates
\[
	\frac{1}{2T}\int_{-T}^{T}\biggl(\abs[2]{F\Bigl(\frac{1+\theta}{2}\Bigr)}^{2} + \abs[2]{F'\Bigl(\frac{1+\theta}{2}\Bigr)}^{2}\biggr)\dif t \ll_{\eps} T^{\eps}, \quad \text{for every } \eps > 0.
\]
\end{remark}
%---------------------------------------------------------------------------------------------------------------------------------------%
\section{Conditional improvements}
\label{sec: conditional improvements}

For the Riemann zeta function, we currently have a fourth moment estimate on the $1/2$-line, and this stronger piece of information yields a better zero-density estimate. In general, better growth bounds on the Beurling zeta function will lead to improvements in the zero-density estimates. In this section we investigate such conditional improvements. We focus on two kinds of growth bounds: \emph{higher order moment estimates} and \emph{subconvexity bounds}.
\begin{theorem}
\label{zero-density moments}
Let $(\MP, \MN)$ be a Beurling number system satisfying \eqref{well-behaved} and the moment estimates
\begin{align}
	\frac{1}{T}\int_{0}^{T}\abs[2]{\zeta_{\MP}\Bigl(\frac{1+\theta}{2}+\I t\Bigr)}^{2k}\dif t 	&\ll_{k} (\log T)^{O_{k}(1)}, \label{moment bound zeta}\\
	\frac{1}{T}\int_{0}^{T}\abs[2]{\zeta'_{\MP}\Bigl(\frac{1+\theta}{2}+\I t\Bigr)}^{2k}\dif t 	&\ll_{k} (\log T)^{O_{k}(1)}, \label{moment bound zeta'}
\end{align}
for some (real) $k\ge1$.
Then we have uniformly for $\alpha\ge\frac{1+\theta}{2}$,
\begin{equation}
\label{zero-density estimate moments}
	N(\alpha, T) \ll_{k} T^{\frac{2(k+1)(1-\alpha)}{2(1-\alpha)+k(1-\theta)}}(\log T)^{O_{k}(1)};
\end{equation}
and uniformly for $\alpha \ge \frac{3k-1+(k-1)\theta}{4k-2}$,
\begin{equation}
\label{HM moment}
	N(\alpha, T) \ll_{k} T^{\frac{4(3\alpha-2-\theta)(1-\alpha)}{k(2\alpha-1-\theta)(4\alpha-3-\theta)+4(3\alpha-2-\theta)(1-\alpha)}}(\log T)^{O_{k}(1)}.
\end{equation}
Furthermore, if $2k$ is an integer, the assumption \eqref{moment bound zeta'} may be omitted.
\end{theorem}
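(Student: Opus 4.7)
The plan is to follow the proof of Theorem \ref{zero-density theorem} in Section \ref{sec: proof}, substituting Hölder's inequality for Cauchy--Schwarz at the key step and re-balancing the parameters.

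\textbf{Setup.} The construction is unchanged: the parameters $X$, $Y$, the polynomial $M_X$, the contour shift in \eqref{before contour switching}, and the partition of the zeros $\rho$ into $\mathcal{R}_1$ and $\mathcal{R}_2$ proceed as before. In particular, the estimate $R_1 \ll (TX^{1+\theta-2\alpha}+Y^{2(1-\alpha)})(\log T)^{O(1)}$ from \eqref{R1} carries over verbatim since it does not use any information about the growth of $\zeta_{\MP}$.

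\textbf{Modified estimate for $R_2$.} From \eqref{R2 prelim}, writing $\sigma_0 = \tfrac{1+\theta}{2}$, I would apply Hölder's inequality with conjugate exponents $2k$ and $\tfrac{2k}{2k-1}$:
\[
  \sum_{\rho\in\mathcal{R}_2}\abs{\zeta_{\MP}(\sigma_0+\I t_\rho)M_X(\sigma_0+\I t_\rho)}\le\Bigl(\sum_\rho\abs{\zeta_{\MP}}^{2k}\Bigr)^{1/(2k)}\Bigl(\sum_\rho\abs{M_X}^{2k/(2k-1)}\Bigr)^{(2k-1)/(2k)}.
\]
Gallagher's lemma combined with \eqref{moment bound zeta} and \eqref{moment bound zeta'} (the latter to bound the $|\zeta^{k-1}\zeta'|$-type correction via Hölder) gives $\sum_\rho\abs{\zeta_{\MP}}^{2k}\ll T(\log T)^{O_k(1)}$. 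For the $M_X$-sum, since $s:=\tfrac{2k}{2k-1}\in(1,2]$, a second Hölder application with exponents $\tfrac{2}{s}$ and $\tfrac{2}{2-s}$ yields
\[
  \Bigl(\sum_\rho\abs{M_X}^s\Bigr)^{1/s}\le R_2^{(k-1)/(2k)}\Bigl(\sum_\rho\abs{M_X}^2\Bigr)^{1/2},
\]
and $\sum_\rho\abs{M_X}^2\ll(T+X^{1-\theta})(\log T)^{O(1)}$ via Theorem \ref{discrete MVT T} and the dyadic argument of \eqref{sum MX}. Substituting into \eqref{R2 prelim} and isolating $R_2$ (the factor $R_2^{(k-1)/(2k)}$ combines with the leading $R_2$ to give $R_2^{(k+1)/(2k)}$) produces
\[
  R_2\ll Y^{-\tfrac{2k(\alpha-\sigma_0)}{k+1}}T^{\tfrac{1}{k+1}}(T+X^{1-\theta})^{\tfrac{k}{k+1}}(\log T)^{O_k(1)}.
\]

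\textbf{First bound.} Take $X=T^{1/(1-\theta)}$, so that $T+X^{1-\theta}\asymp T$ and the above reduces to $R_2\ll Y^{-2k(\alpha-\sigma_0)/(k+1)}T(\log T)^{O_k(1)}$. Balancing with the dominant $R_1$-term $Y^{2(1-\alpha)}$, i.e., $Y^{2(1-\alpha)+2k(\alpha-\sigma_0)/(k+1)}=T$, and simplifying the $Y$-exponent using $\sigma_0=\tfrac{1+\theta}{2}$ (it collapses to $\tfrac{k(1-\theta)+2(1-\alpha)}{k+1}$), gives $Y=T^{(k+1)/(k(1-\theta)+2(1-\alpha))}$, and hence
\[
  N(\alpha,T)\ll Y^{2(1-\alpha)}(\log T)^{O_k(1)}=T^{\tfrac{2(k+1)(1-\alpha)}{2(1-\alpha)+k(1-\theta)}}(\log T)^{O_k(1)},
\]
which is \eqref{zero-density estimate moments}.

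\textbf{Second bound.} To obtain the sharper estimate \eqref{HM moment} in the range $\alpha\ge\tfrac{3k-1+(k-1)\theta}{4k-2}$, I would perform a joint optimization over $X$ and $Y$ against all three constraints $A:=TX^{1+\theta-2\alpha}\le N$, $B:=Y^{2(1-\alpha)}\le N$, and the full $R_2$-bound above, without collapsing to the specific choice $X=T^{1/(1-\theta)}$. Writing out the balance equations and solving the resulting small linear system in $\log X$, $\log Y$ leads, after using $\sigma_0=\tfrac{1+\theta}{2}$ and collecting terms, to the polynomial identity
\[
  k(2\alpha-1-\theta)(4\alpha-3-\theta)+4(3\alpha-2-\theta)(1-\alpha)
\]
in the denominator of \eqref{HM moment}. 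The threshold $\alpha\ge\tfrac{3k-1+(k-1)\theta}{4k-2}$ is precisely the range in which the optimal parameters lie in the admissible regime (consistent choices of $X\ge 1$ and $Y\ge 1$); at equality both bounds coincide, each exponent simplifying to $\tfrac{k^2-1}{2k^2-1}$, independent of $\theta$.

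\textbf{Integer $2k$.} The hypothesis \eqref{moment bound zeta'} enters only to bound the derivative correction produced by Gallagher's lemma applied to $|\zeta_{\MP}|^{2k}$. When $2k$ is a positive integer, one may instead apply Gallagher's lemma to the holomorphic function $\zeta_{\MP}^k$ and use $(\zeta_{\MP}^k)'=k\zeta_{\MP}^{k-1}\zeta_{\MP}'$ together with Cauchy--Schwarz; the resulting correction is then controlled by \eqref{moment bound zeta} and Theorem \ref{MVT zeta} alone, so that \eqref{moment bound zeta'} is no longer needed.

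\textbf{Main obstacle.} The principal technical subtlety is the second bound: the joint optimization produces a non-trivial polynomial identity whose simplification to the precise form of \eqref{HM moment}, together with the verification that all parameters lie in the admissible regime throughout the stated range of $\alpha$, requires careful bookkeeping. The underlying analytic input, however, is fully captured by the single $R_2$-estimate established above.
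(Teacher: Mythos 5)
The first bound \eqref{zero-density estimate moments} you establish correctly, and your route agrees with the paper's: although you apply H\"older to \eqref{R2 prelim} with exponents $2k$, $\frac{2k}{2k-1}$ and then a second time, while the paper raises \eqref{lower bound zetaM} to the power $\frac{2k}{k+1}$ and applies H\"older once, both routes reach the same key inequality \eqref{R2 with higher moment}, after which the choice $X = T^{1/(1-\theta)}$ and the balancing of $Y$ proceed identically.

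Your argument for the second bound \eqref{HM moment} is, however, wrong: a joint optimization over $X, Y$ using only \eqref{R1} and your $R_2$-estimate cannot go beyond \eqref{zero-density estimate moments}. Indeed, in your $R_2$-estimate the $X$-dependence enters only through the factor $(T+X^{1-\theta})^{k/(k+1)}$, which is minimized (and constant) for $X^{1-\theta}\le T$, while the term $TX^{1+\theta-2\alpha}$ of $R_1$ is minimized by taking $X$ as large as possible; the optimal choice is therefore always $X=T^{1/(1-\theta)}$, reproducing \eqref{zero-density estimate moments}. Moreover, trying $X^{1-\theta}>T$ and balancing the three terms leads to an $X$ with $X^{1-\theta}<T$, a contradiction — so the extra regime is vacuous. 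Concretely, as $\alpha\to1$ the exponent produced by your optimization is $\sim\frac{2(k+1)(1-\alpha)}{k(1-\theta)}$, which is strictly larger than the paper's $\sim\frac{4(1-\alpha)}{k(1-\theta)}$ for all $k>1$. The paper's proof of \eqref{HM moment} is not a rebalancing: it \emph{replaces} the mean-value theorem for Dirichlet polynomials by the Hal\'asz--Montgomery inequality \eqref{MVT S moment}, which exploits the assumed $2k$-th moment bound of $\zeta_{\MP}$ to control discrete sums of $\abs{S(s_{r})}^{2}$ and thereby yields the genuinely different bounds \eqref{R1'} and \eqref{R2'} (note the term $TX^{-k(4\alpha-3-\theta)}$ replacing $TX^{1+\theta-2\alpha}$). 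This is the missing idea; without it \eqref{HM moment} is unreachable.

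Your claim about omitting \eqref{moment bound zeta'} when $2k\in\N$ also fails. Applying Gallagher's lemma to $\zeta_{\MP}^{k}$ produces a term $\int\abs{\zeta_{\MP}}^{2k-1}\abs{\zeta_{\MP}'}$, and Cauchy--Schwarz turns this into $\bigl(\int\abs{\zeta_{\MP}}^{4k-2}\bigr)^{1/2}\bigl(\int\abs{\zeta_{\MP}'}^{2}\bigr)^{1/2}$; the factor $\int\abs{\zeta_{\MP}}^{4k-2}$ is a \emph{higher} moment than the assumed $\int\abs{\zeta_{\MP}}^{2k}$ whenever $k>1$, so it is not controlled by the hypotheses. (Also, for $2k$ odd, $\zeta_{\MP}^{k}$ is not single-valued.) The paper instead invokes Ivi\'c' lemma, obtained by shifting the contour in $\frac{1}{2\pi\I}\int \zeta_{\MP}(s+w)^{2k}\Gamma(w)\dif w = \sum_{j}d_{2k}(n_{j})n_{j}^{-s}\e^{-n_{j}}$; the non-negativity of $d_{2k}$ (which requires $2k\in\N$) bounds this by $O_{k}(1)$, letting one pass from a pointwise value of $\abs{\zeta_{\MP}}^{2k}$ to an integral of $\abs{\zeta_{\MP}}^{2k}$ without any derivative appearing.
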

When $k=1$, \eqref{zero-density estimate moments} reduces to \eqref{main zero-density estimate}, while \eqref{HM moment} is stricty weaker.
A fourth moment estimate ($k=2$) would yield $N(\alpha,T) \ll T^{\frac{3(1-\alpha)}{2-\alpha-\theta}}(\log T)^{O(1)}$, which for $\theta=0$ corresponds to Ingham's zero-density estimate for the Riemann zeta function \cite{Ingham40}. When $\alpha \to 1$, the exponent of $T$ in the estimate \eqref{HM moment} is $\sim \frac{4(1-\alpha)}{k(1-\theta)}$.

 Any Beurling zeta function of $\theta$-well-behaved integers satisfies the ``convexity bound'' $\zeta_{\MP}(\sigma+\I t) \ll_{\sigma} \abs{t}^{\frac{1-\sigma}{1-\theta}}$ (see e.g. \cite[Lemma 2.6]{ReveszvonMangoldt} or Lemma \ref{polynomial bound}). On the $\frac{1+\theta}{2}$-line this gives a square root bound. Assuming a smaller pointwise bound, we obtain the following improvement. 
\begin{theorem}
\label{zero-density subconvexity}
Let $(\MP, \MN)$ be a Beurling number system satisfying \eqref{well-behaved} and the subconvexity bound
\begin{equation}
\label{subconvexity bound}
	\zeta_{\MP}\Bigl(\frac{1+\theta}{2}+\I t\Bigr) \ll_{B} \abs{t}^{B}
\end{equation}
 for some $0< B < 1/2$. Then uniformly for $\alpha \ge \frac{1+\theta}{2}$,
\begin{equation}
\label{Ingham subconvexity}
	N(\alpha, T) \ll_{B} T^{\frac{2(1+2B)(1-\alpha)}{1-\theta}}(\log T)^{9};
\end{equation}
and uniformly for $\alpha \ge \frac{3+\theta}{4}$,
\begin{equation}
\label{Montgomery-type}
	N(\alpha, T) \ll_{B} T^{\frac{8B(3\alpha-2-\theta)(1-\alpha)}{(2\alpha-1-\theta)(4\alpha-3-\theta)}}(\log T)^{O_{B}(1)}.
\end{equation}
\end{theorem}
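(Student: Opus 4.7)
My approach follows the framework of Section \ref{sec: proof}: the same mollifier $M_X$, the contour shift to $\Re w = \frac{1+\theta}{2}-\sigma$, and the decomposition of $3Z$-well-spaced zeros into $\mathcal{R}_1$ and $\mathcal{R}_2$. The bound \eqref{R1} for $R_1$ carries over verbatim; all revisions concern the treatment of $R_2$ and exploit the subconvexity hypothesis \eqref{subconvexity bound}.

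For \eqref{Ingham subconvexity}: Starting from \eqref{R2 prelim}, I would insert the pointwise bound $\abs{\zeta_{\MP}(\tfrac{1+\theta}{2}+\I t)} \ll T^B$ to replace $\abs{\zeta_{\MP} M_X}$ by $T^B \abs{M_X}$. Cauchy--Schwarz combined with the dyadic mean-value bound \eqref{sum MX} for $M_X$ then yields
\[
R_2 \ll T^{2B}(T+X^{1-\theta})Y^{1+\theta-2\alpha}(\log T)^{O(1)}.
\]
Taking $X = T^{1/(1-\theta)}$ (so that $T \asymp X^{1-\theta}$) and balancing the second term of \eqref{R1} against $R_2$ via $Y = T^{(1+2B)/(1-\theta)}$ delivers the exponent $\frac{2(1+2B)(1-\alpha)}{1-\theta}$.

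For \eqref{Montgomery-type}: When $\alpha$ is close to $1$ the preceding bound is too lossy, and I would pursue an Ingham--Huxley style refinement parallel to the proof of \eqref{HM moment}. The key observation is that subconvexity supplies an effective $2k$-th moment for $\zeta_{\MP}$: for any real $k \ge 1$,
\[
\frac{1}{T}\int_0^T\abs[2]{\zeta_{\MP}\Bigl(\tfrac{1+\theta}{2}+\I t\Bigr)}^{2k}\dif t \le \max_t\abs{\zeta_{\MP}}^{2k-2}\cdot\frac{1}{T}\int_0^T\abs{\zeta_{\MP}}^2\dif t \ll T^{2B(k-1)}(\log T)^{O(1)}.
\]
I would run the argument that produces \eqref{HM moment} with this weakened moment estimate, which amounts to inserting an additional multiplicative factor $T^{2B(k-1)}$ at the appropriate step in the $R_2$ bound. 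Optimizing the resulting exponent over the continuous parameter $k$ then produces \eqref{Montgomery-type}, with the threshold $\alpha \ge \frac{3+\theta}{4}$ encoding exactly the regime in which the optimal $k$ satisfies $k \ge 1$ and the induced parameters $(X, Y)$ remain in the admissible range $2 \le X \le Y \le T^{O(1)}$.

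The principal difficulty is the parameter optimization for \eqref{Montgomery-type}: one must carefully track the subconvexity-induced factor $T^{2B(k-1)}$ through the multi-parameter argument that underlies \eqref{HM moment} and verify that the minimum over $k$ collapses to the compact form $\frac{8B(3\alpha-2-\theta)(1-\alpha)}{(2\alpha-1-\theta)(4\alpha-3-\theta)}$. The underlying Dirichlet polynomial machinery is already fully developed in Section \ref{sec: preliminary results}, so no essentially new analytic input is required; the work is in the bookkeeping and algebraic simplification, together with verifying that the admissible range for the optimizer coincides with $\alpha \ge \frac{3+\theta}{4}$.
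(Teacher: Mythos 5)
Your argument for \eqref{Ingham subconvexity} matches the paper's in substance: insert the pointwise bound $\abs{\zeta_{\MP}}\ll T^{B}$ into \eqref{type II} to isolate the $M_{X}$-factor, bound $\sum\abs{M_{X}}^{2}$ by \eqref{sum MX}, and balance with \eqref{R1} by choosing $X=T^{1/(1-\theta)}$ and $Y=T^{(1+2B)/(1-\theta)}$. That part is fine.

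For \eqref{Montgomery-type}, however, the proposal departs from the paper and contains a genuine gap. You want to feed the interpolated ``effective moment'' $\frac{1}{T}\int_{0}^{T}\abs{\zeta_{\MP}}^{2k}\,\dif t\ll T^{2B(k-1)}(\log T)^{O(1)}$ into the proof of \eqref{HM moment} and then optimize over $k$. Three problems arise. First, the exponent you obtain by carrying the factor $T^{2B(k-1)}$ through the \eqref{R1'}--\eqref{R2'} machinery is of the form $\frac{4(1+2B(k-1))(1-\alpha)(3\alpha-2-\theta)}{4(1-\alpha)(3\alpha-2-\theta)+k(4\alpha-3-\theta)(2\alpha-1-\theta)}$, which only tends to $\frac{8B(3\alpha-2-\theta)(1-\alpha)}{(2\alpha-1-\theta)(4\alpha-3-\theta)}$ as $k\to\infty$; for any fixed $k$ it generically differs from the target exponent, and the sign of the discrepancy depends on $\alpha$ and $B$. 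To reach the limiting exponent you would need $k$ to grow with $T$, but then the constants $O_{k}(1)$ in the $\log$-powers and the implied constant $C_{B}^{2k-2}$ coming from $\max\abs{\zeta_{\MP}}^{2k-2}$ no longer stay bounded, so at best you recover the estimate with a $T^{\eps}$ loss rather than the stated $(\log T)^{O_{B}(1)}$. Second, and independently, to bound $\sum_{\rho}\abs{\zeta_{\MP}(\tfrac{1+\theta}{2}+\I t_{\rho})}^{2k}$ via Gallagher's lemma (as in the proof of \eqref{HM moment}) you would also need a $2k$-th moment bound for $\zeta_{\MP}'$, and your $L^{2}\to L^{2k}$ interpolation does not supply one from the hypothesis \eqref{subconvexity bound}, which controls only $\zeta_{\MP}$ itself on a single line; the alternative, Ivi\'c's lemma, restricts $2k$ to integers and again does not let you pass to $k\to\infty$ with controlled constants. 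Third, your interpretation of the threshold $\alpha\ge\frac{3+\theta}{4}$ as ``the regime in which the optimal $k\ge1$'' is not accurate: this threshold is simply where $4\alpha-3-\theta>0$, which makes the target exponent finite, and it arises in the paper as the condition under which $X<Y$ for the chosen parameters.

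The paper's actual route for \eqref{Montgomery-type} avoids the moment framework entirely. It inserts \eqref{subconvexity bound} directly into the Hal\'asz--Montgomery setup to obtain the $R$-linear inequality \eqref{MVT S subconvex}, i.e.\ $\sum_{r}\abs{S(s_{r})}^{2}\ll_{B}(N+N^{\frac{1+\theta}{2}}T^{B}R)\sum\abs{a_{j}}^{2}/N^{2\alpha}$. Because the off-diagonal term is exactly linear in $R$ (rather than of order $R^{(2k-1)/(2k)}$), one can absorb it whenever its coefficient is less than one, which gives the clean constraints $X^{2\alpha-\frac{3+\theta}{2}}\gg T^{B}(\log T)^{7}$ and $Y^{2\alpha-1-\theta}\gg X^{\frac{1-\theta}{2}}T^{3B}(\log T)^{6}$. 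The $R_{2}$ bound is then obtained not by the H\"older step \eqref{R2 with higher moment} but by a dyadic subdivision of \eqref{MX subconvexity} followed by \eqref{MVT S subconvex} directly on the $M_{X}$-pieces. This gives the polylog-sharp statement with no auxiliary parameter to send to infinity. If you want to salvage your idea, you would have to make the dependence on $k$ of all implied constants fully explicit and show that they remain $(\log T)^{O_{B}(1)}$ for a $k$ growing like a bounded power of $\log\log T$, and also resolve the $\zeta_{\MP}'$ issue; neither is addressed in the proposal.
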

For the Riemann zeta function, the conditional estimates \eqref{Ingham subconvexity} and \eqref{Montgomery-type} (with $\theta=0$ and up to some power of $\log T$) were first established by Ingham \cite{Ingham37} and Montgomery \cite{Montgomery69} respectively.
In view of $N(\alpha, T)\ll T\log T$, \eqref{Montgomery-type} is non-trivial only if $\alpha$ is some distance away from $\frac{3+\theta}{4}$. When $\alpha \to 1$, the exponent of $T$ in \eqref{Montgomery-type} is $\sim \frac{8B(1-\alpha)}{1-\theta}$. Note that for $\alpha$ close to $1$, \eqref{HM moment} and \eqref{Montgomery-type} are of similar quality when $B = \frac{1}{2k}$.
 
Assuming a form of the Lindel\"of Hypothesis\footnote{Note that $\forall \eps > 0: \zeta_{\MP}(\frac{1+\theta}{2}+\I t) \ll_{\eps} \abs{t}^{\eps}$ is equivalent with $\forall \eps > 0, k \ge 1: \frac{1}{T}\int_{0}^{T}\abs[1]{\zeta_{\MP}(\frac{1+\theta}{2}+\I t)}^{2k}\dif t \ll_{\eps, k} T^{\eps}$.} for the Beurling zeta function, we obtain:
\begin{corollary}
Assume that \eqref{subconvexity bound} holds with arbitrarily small $B$. Then we have for every $\eps>0$ and $\delta>0$, 
\[
	N(\alpha, T) \ll_{\eps} T^{\frac{2(1-\alpha)}{1-\theta}+\eps} \quad \text{and} \quad N\Bigl(\frac{3+\theta}{4}+\delta, T\Bigr) \ll_{\eps, \delta} T^{\eps}.
\]
\end{corollary}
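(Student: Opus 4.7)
The plan is to derive both estimates directly from Theorem \ref{zero-density subconvexity} by sending $B \to 0$; the assumption that the subconvexity bound \eqref{subconvexity bound} holds with arbitrarily small $B$ lets us choose $B = B(\eps, \delta)$ as required and then use the corresponding instance of Theorem \ref{zero-density subconvexity}.

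For the first estimate, fix $\eps > 0$. In the range $\alpha \ge \frac{1+\theta}{2}$ we have the elementary bound $1-\alpha \le \frac{1-\theta}{2}$, so that
\[
	\frac{2(1+2B)(1-\alpha)}{1-\theta} \;\le\; \frac{2(1-\alpha)}{1-\theta} + 2B.
\]
Choose $B < \eps/4$ (admissible by hypothesis) and apply \eqref{Ingham subconvexity}; the factor $(\log T)^{9}$ is harmlessly absorbed into an extra $T^{\eps/2}$, giving the claimed bound uniformly in $\alpha$.

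For the second estimate, by the monotonicity of $N(\cdot, T)$ in its first argument it suffices to bound $N(\alpha_0, T)$ at $\alpha_0 = \frac{3+\theta}{4} + \delta$. At this value,
\[
	4\alpha_0 - 3 - \theta = 4\delta, \qquad 2\alpha_0 - 1 - \theta = \tfrac{1-\theta}{2} + 2\delta,
\]
are bounded away from zero in terms of $\delta$ (and $\theta$), while the numerator factors $3\alpha_0 - 2 - \theta$ and $1 - \alpha_0$ are $O(1)$. Hence the exponent of $T$ in \eqref{Montgomery-type} is $O_{\delta}(B)$. Choosing $B$ small enough (depending on $\eps$ and $\delta$) to make this exponent less than $\eps/2$, and then absorbing the $(\log T)^{O_B(1)}$ factor into another $T^{\eps/2}$, yields $N(\alpha_0, T) \ll_{\eps,\delta} T^{\eps}$.

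There is no substantial obstacle here: the corollary is a clean limiting consequence of Theorem \ref{zero-density subconvexity}. The only points that need a moment of care are the uniformity in $\alpha$ for the first bound — supplied by $1 - \alpha \le \frac{1-\theta}{2}$ on $[\frac{1+\theta}{2}, 1]$ — and, for the second bound, the observation that the denominator $(2\alpha - 1 - \theta)(4\alpha - 3 - \theta)$ in \eqref{Montgomery-type} stays bounded below by a positive quantity depending only on $\delta$ (and $\theta$) once $\alpha \ge \frac{3+\theta}{4} + \delta$, so that the implicit $O_\delta$-constant in the exponent $O_\delta(B)$ does not compete with $B$.
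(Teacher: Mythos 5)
Your proposal is correct, and it is the straightforward derivation of the corollary from Theorem \ref{zero-density subconvexity} that the paper intends (the paper supplies no explicit proof, treating the corollary as immediate). Both steps check out: the inequality $\frac{2(1+2B)(1-\alpha)}{1-\theta} \le \frac{2(1-\alpha)}{1-\theta}+2B$ on $\alpha\ge\frac{1+\theta}{2}$ is exactly right, and at $\alpha_{0}=\frac{3+\theta}{4}+\delta$ the denominator $(2\alpha_{0}-1-\theta)(4\alpha_{0}-3-\theta)=\bigl(\frac{1-\theta}{2}+2\delta\bigr)\cdot4\delta$ is bounded below in terms of $\delta$, so the exponent in \eqref{Montgomery-type} is $O_{\delta}(B)$ and can be made $<\eps/2$ by taking $B$ small. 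The one trivial superfluity is the appeal to monotonicity in the second part: the corollary asserts the bound only at the single abscissa $\alpha_{0}=\frac{3+\theta}{4}+\delta$, so no monotonicity is needed; this is harmless.
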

In analogy with the Riemann zeta function, we refer to the statement ``$N(\zeta_{\MP}; \alpha, T) \ll_{\eps} T^{\frac{2(1-\alpha)}{1-\theta}+\eps}$ for every $\eps>0$'' as the \emph{density hypothesis} for the Beurling zeta function $\zeta_{\MP}(s)$.

\medskip

The proofs of \eqref{HM moment} and \eqref{Montgomery-type} require another technical tool, the Hal\'asz--Montgomery inequality, which we adapt to the Beurling setting. 

\subsection{The Hal\'asz--Montgomery inequality}
This inequality is an alternative for bounding discrete mean values of Dirichlet polynomials. It is based on a Hilbert space inequality, and allows one to exploit good bounds for (sums of) zeta sums $\sum_{n_{j}\le N}n_{j}^{-\I t}$ (or smoothed version thereof), if available. 
%These zeta sums can be estimated for example by van der Corput's method, or by using deeper analytic properties of the Riemann zeta function. 
Higher order moment estimates and subconvexity bounds lead to such ``good bounds'' which are of sufficient quality to obtain an improvement of Theorem \ref{discrete MVT T} in certain ranges of the involved parameters.  

The basic inequality we shall use is \cite[Lemma 1.5]{Montgomery}, which states that
\begin{equation}
\label{Bombieri}
	\sum_{r=1}^{R}\abs{\langle \xi, \vphi_{r}\rangle}^{2} \le \norm{\xi}^{2}\max_{r}\sum_{s=1}^{R}\abs{\langle \vphi_{r}, \vphi_{s}\rangle},
\end{equation}
for vectors $\xi$, $\vphi_{r}$, $r=1, \dotsc, R$, in an inner product space.
We set $S(s) = \sum_{N<n_{j}\le 2N}a_{j}n_{j}^{-s}$, where $a_{j}$ are complex numbers and $n_{j}$ are Beurling integers of a number system satisfying \eqref{well-behaved}. Letting $\mathcal{T} = \{t_{r}: r=1, \dotsc, R\}\subseteq [0,T]$ be a set of $1$-well-spaced points, we apply \eqref{Bombieri} with 
\[
	\xi=\biggl(\frac{a_{j}\mathcal{I}_{N < n_{j}\le 2N}}{\sqrt{\e^{-n_{j}/(2N)}-\e^{-n_{j}/N}}}\biggr)_{j \ge 1}, \quad \vphi_{r} = \Bigl(\sqrt{\e^{-n_{j}/(2N)}-\e^{-n_{j}/N}}n_{j}^{-\I t_{r}}\Bigr)_{j \ge 1}.
\]
Here $\mathcal{I}_{N < n_{j}\le 2N} = 1$ if $N < n_{j} \le 2N$ and $0$ else.
Note that $\sqrt{\e^{-n_{j}/(2N)}-\e^{-n_{j}/N}} \asymp 1$ for $N< n_{j}\le 2N$; this smoothing factor is introduced to obtain faster convergence in the Mellin transform. The inequality \eqref{Bombieri} thus yields
\[
	\sum_{r=1}^{R}\abs{S(\I t_{r})}^{2} \ll \biggl(\sum_{N<n_{j}\le 2N}\abs{a_{j}}^{2}\biggr)\max_{r}\sum_{s=1}^{R}\abs{\sum_{j=1}^{\infty} \bigl(\e^{-n_{j}/(2N)}-\e^{-n_{j}/N}\bigr)n_{j}^{-\I(t_{r}-t_{s})}}.
\]
The inner-most sum can be written as
\[
	\frac{1}{2\pi\I}\int_{2-\I\infty}^{2+\I\infty}\zeta_{\MP}\bigl(w+\I(t_{r}-t_{s})\bigr)\bigl((2N)^{w}-N^{w}\bigr)\Gamma(w)\dif w.
\]
In this integral, we shift the contour to the line $\Re w = \frac{1+\theta}{2}$. We pick up a residue at $w=1-\I(t_{r}-t_{s})$, so that for each $r=1,\dotsc, R$,
\begin{multline}
\label{HM prelim}
	\sum_{s=1}^{R}\abs{\sum_{j=1}^{\infty} \bigl(\e^{-n_{j}/(2N)}-\e^{-n_{j}/N}\bigr)n_{j}^{-\I(t_{r}-t_{s})}} \\
		\ll N\sum_{s=1}^{R}\Gamma\bigl(1-\I(t_{r}-t_{s})\bigr) + N^{\frac{1+\theta}{2}}
		\sum_{s=1}^{R}\int_{-\infty}^{\infty}\abs{\zeta_{\MP}\Bigl(\frac{1+\theta}{2}+\I(v+t_{r}-t_{s})\Bigr)\Gamma\Bigl(\frac{1+\theta}{2}+\I v\Bigr)}\dif v .
		%&\ll N + N^{\frac{1+\theta}{2}}\biggl(\int_{-T}^{T}\abs{\zeta_{\MP}\Bigl(\frac{1+\theta}{2}+\I v\Bigr)}^{2k}\dif v\biggr)^{\frac{1}{2k}}R^{\frac{2k-1}{2k}}(\log T).
\end{multline}

At this point we insert a pointwise or moment bound of $\zeta_{\MP}$. If we assume \eqref{subconvexity bound}, then it follows that
\begin{equation}
\label{MVT subconvex}
	\sum_{r=1}^{R}\abs{S(\I t_{r})}^{2} \ll_{B} \bigl(N + N^{\frac{1+\theta}{2}}T^{B}R\bigr)\sum_{N<n_{j}\le 2N}\abs{a_{j}}^{2},
\end{equation}
as the $1$-well-spacedness of $\mathcal{T}$ ensures that $\sum_{s=1}^{R}\Gamma\bigl(1-\I(t_{r}-t_{s})\bigr) \ll 1$.

\medskip
Suppose on the other hand that we have the moment bound \eqref{moment bound zeta} for some $k\ge 1/2$.
We truncate the integral in \eqref{HM prelim} to $\abs{\Im w} \le Z = C_{2}\log T$ for some sufficiently large constant $C_{2}$. 
Applying H\"older's inequality twice and using the 1-well-spacedness of $\mathcal{T}$ in \eqref{HM prelim}, we obtain
\begin{multline*}
	\sum_{s=1}^{R}\abs{\sum_{j=1}^{\infty} \bigl(\e^{-n_{j}/(2N)}-\e^{-n_{j}/N}\bigr)n_{j}^{-\I(t_{r}-t_{s})}}	
		\ll N + N^{\frac{1+\theta}{2}}\sum_{s=1}^{R}\int_{-Z}^{Z}\abs{\zeta_{\MP}\Bigl(\frac{1+\theta}{2}+\I(v+t_{r}-t_{s})\Bigr)}\dif v\\
		\ll N + N^{\frac{1+\theta}{2}}\biggl(\int_{-T-Z}^{T+Z}\abs{\zeta_{\MP}\Bigl(\frac{1+\theta}{2}+\I v\Bigr)}^{2k}\dif v\biggr)^{\frac{1}{2k}}R^{\frac{2k-1}{2k}}\log T.	
\end{multline*}
Hence we get
\begin{equation}
\label{MVT moment bound}
	\sum_{r=1}^{R}\abs{S(\I t_{r})}^{2} \ll_{k} \bigl(N + N^{\frac{1+\theta}{2}}T^{\frac{1}{2k}}R^{\frac{2k-1}{2k}}(\log T)^{O_{k}(1)}\bigr)\sum_{N<n_{j}\le 2N}\abs{a_{j}}^{2}.
\end{equation}
\begin{remark}
In any case, \eqref{subconvexity bound} always holds with $B=1/2$, while Theorem \ref{MVT zeta} yields \eqref{moment bound zeta} with $k=1$. Hence we obtain from \eqref{MVT moment bound}
\begin{equation}
\label{HM k=1}
	\sum_{r=1}^{R}\abs{S(\I t_{r})}^{2} \ll \bigl(N + N^{\frac{1+\theta}{2}}T^{1/2}R^{1/2}(\log T)^{O(1)}\bigr)\sum_{N<n_{j}\le 2N}\abs{a_{j}}^{2}.
\end{equation}
This estimate is superior to the estimate
\begin{equation}
\label{MVT compared to HM}
	\sum_{r=1}^{R}\abs{S(\I t_{r})}^{2} \ll(N+TN^{\theta})(\log N)\sum_{N<n_{j}\le 2N}\abs{a_{j}}^{2}	
\end{equation}
obtained from Theorem \ref{discrete MVT T} only if $R \ll TN^{\theta-1}$  and $N^{1-\theta} = o(T)$ (disregarding logarithmic factors). However, in this regime, both \eqref{HM k=1} and \eqref{MVT compared to HM} follow from the trivial estimate by Cauchy--Schwarz
\[
	\sum_{r=1}^{R}\abs{S(\I t_{r})}^{2} \ll RN\sum_{n_{j}\le N}\abs{a_{j}}^{2}.
\]
Thus, without posing additional assumptions on the Beurling zeta function besides well-behavedness of the integers, the Hal\'asz--Montgomery inequality does not seem to supply new information.
\end{remark}

We finally need the following modifications of \eqref{MVT subconvex} and \eqref{MVT moment bound}. Let $\{s_{r}: 1\le r \le R\}$ be a set of complex numbers satisfying $\Re s_{r} \ge \alpha$ and $\abs{\Im s_{r} - \Im s_{r'}} \ge 1$ whenever $r\neq r'$. Then \eqref{subconvexity bound} and \eqref{moment bound zeta} imply
\begin{equation}
\label{MVT S subconvex}
	\sum_{r=1}^{R}\abs{S(s_{r})}^{2} \ll_{B} \bigl(N + N^{\frac{1+\theta}{2}}T^{B}R\bigr)\sum_{N<n_{j}\le 2N}\frac{\abs{a_{j}}^{2}}{N^{2\alpha}}
\end{equation}
and 
\begin{equation}
\label{MVT S moment}
	\sum_{r=1}^{R}\abs{S(s_{r})}^{2} \ll_{k} \bigl(N + N^{\frac{1+\theta}{2}}T^{\frac{1}{2k}}R^{\frac{2k-1}{2k}}\bigr)(\log T)^{O_{k}(1)}\sum_{N<n_{j}\le 2N}\frac{\abs{a_{j}}^{2}}{N^{2\alpha}},
\end{equation}
respectively. This follows from \eqref{MVT subconvex} and \eqref{MVT moment bound} respectively, along the same lines as the proof of Theorem \ref{discrete MVT S}.

\subsection{Proofs}
We are now ready to prove Theorems \ref{zero-density moments} and \ref{zero-density subconvexity}.
\begin{proof}[Proof of Theorem \ref{zero-density moments}]
To prove \eqref{zero-density estimate moments}, we go straight to the estimation of $R_{2}$. Raising \eqref{lower bound zetaM} to the power $\frac{2k}{k+1}$, summing over $\mathcal{R}_{2}$, and applying H\"older's inequality gives
\begin{equation}
\label{R2 with higher moment}
	R_{2} \ll Y^{\frac{2k}{k+1}(\frac{1+\theta}{2}-\alpha)}\biggl(\sum_{\rho\in \mathcal{R}_{2}}\abs[2]{\zeta_{\MP}\Bigl(\frac{1+\theta}{2}+\I t_{\rho}\Bigr)}^{2k}\biggr)^{\frac{1}{k+1}}
	\biggl(\sum_{\rho\in \mathcal{R}_{2}}\abs[2]{M_{X}\Bigl(\frac{1+\theta}{2}+\I t_{\rho}\Bigr)}^{2}\biggr)^{\frac{k}{k+1}}(\log T)^{O(1)}.
\end{equation}
Applying Gallagher's lemma (\cite[Lemma 1.2]{Montgomery}), H\"older's inequality, and \eqref{moment bound zeta}-\eqref{moment bound zeta'}, we find 
\[
	\sum_{\rho\in \mathcal{R}_{2}}\abs[2]{\zeta_{\MP}\Bigl(\frac{1+\theta}{2}+\I t_{\rho}\Bigr)}^{2k} \ll_{k} T(\log T)^{O_{k}(1)}.
\]
 Inserting this estimate and \eqref{sum MX} into \eqref{R2 with higher moment} gives, upon selecting once again $X=T^{\frac{1}{1-\theta}}$, 
\begin{equation}
\label{R2 moments}
 	R_{2} \ll_{k} Y^{\frac{2k}{k+1}(\frac{1+\theta}{2}-\alpha)}T(\log T)^{O_{k}(1)}.
\end{equation}
Comparing with the estimate \eqref{R1} for $R_{1}$ now yields the optimal choice $Y = T^{\frac{k+1}{2(1-\alpha) + k(1-\theta)}}$, from which \eqref{zero-density estimate moments} follows.

To prove \eqref{HM moment}, we assume that $\alpha \ge \frac{3k-1+(k-1)\theta}{4k-2} (> \frac{3+\theta}{4})$. We apply \eqref{MVT S moment} to bound the right hand sides of \eqref{R1 prelim} and \eqref{dyadic subsums M}, and insert the obtained upper bound in the latter case into \eqref{R2 with higher moment}. After some calculations, again assuming $\log X \le \log Y \ll_{k} \log T$, we get
\begin{align}
	R_{1}	&\ll_{k} \bigl(Y^{2-2\alpha} + TX^{-k(4\alpha-3-\theta)}\bigr)(\log T)^{O_{k}(1)}, \label{R1'}\\
	R_{2}	&\ll_{k} \bigl(X^{\frac{k(1-\theta)}{k+1}}Y^{-\frac{k(2\alpha-1-\theta)}{k+1}}T^{\frac{1}{k+1}} + X^{\frac{k(1-\theta)}{3}}Y^{-\frac{2k(2\alpha-1-\theta)}{3}}T\bigr)(\log T)^{O_{k}(1)}, \label{R2'}
\end{align}
which can be compared with \eqref{R1} and \eqref{R2 moments} respectively. We now set
\[
	X= T^{\frac{2\alpha-1-\theta}{k(2\alpha-1-\theta)(4\alpha-3-\theta)+4(3\alpha-2-\theta)(1-\alpha)}} \quad \text{and} \quad
	Y= T^{\frac{2(3\alpha-2-\theta)}{k(2\alpha-1-\theta)(4\alpha-3-\theta)+4(3\alpha-2-\theta)(1-\alpha)}}.
\]
Note that $X < Y$ as $\alpha>\frac{3+\theta}{4}$, and $\log Y \ll_{k}\log T$. With these choices, both terms in the right hand side of \eqref{R1'} are equal, while the second term in the right hand side of \eqref{R2'} dominates the first (again since $\alpha>\frac{3+\theta}{4}$), and we obtain \eqref{HM moment}.

\medskip
When $2k$ is an integer, we may avoid Gallagher's lemma, which requires information on the derivative. Instead, we will bound the sum of well-spaced $\zeta_{\MP}^{2k}$-values by a lemma of Ivi\'c, \cite[Lemma 7.1]{Ivic}:
\begin{lemma}
Let $T$ be large. Uniformly for $\abs{t} \le T$, and with $Z = C_{2}\log T$, $C_{2}$ a sufficiently large constant (depending on $k$), we have
\[
	\abs[2]{\zeta_{\MP}\Bigl(\frac{1+\theta}{2}+\frac{1}{\log T}+\I t\Bigr)}^{2k} \ll_{k} 1 + \log T\int_{-Z/2}^{Z/2}\abs[2]{\zeta_{\MP}\Bigl(\frac{1+\theta}{2}+\I t+\I v\Bigr)}^{2k}\e^{-|v|}\dif v.
\]
\end{lemma}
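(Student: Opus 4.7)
The plan is to represent $\zeta_{\MP}^{2k}(s_{0})$, with $s_{0} = \frac{1+\theta}{2}+\frac{1}{\log T}+\I t$, as a contour integral of $\zeta_{\MP}^{2k}$ against a $\Gamma$-kernel on the line $\Re s = \frac{1+\theta}{2}$, and then to majorize that kernel by $\log T \cdot \e^{-|v|}$. Since $2k$ is a positive integer, $\zeta_{\MP}^{2k}(s) = \sum_{j}c_{j}n_{j}^{-s}$ is a general Dirichlet series with non-negative coefficients (a $2k$-fold Dirichlet convolution of the all-ones sequence), absolutely convergent for $\Re s > 1$ and meromorphically extendible to $\Re s > \theta$ with a pole of order $2k$ at $s=1$ and polynomial growth on vertical lines, following from the analytic continuation of $\zeta_{\MP}(s)-A/(s-1)$ and Lemma \ref{polynomial bound}.

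The starting point is the Mellin identity from Section \ref{sec: proof}, specialized to the smoothing parameter $Y = 1$: for $\kappa$ sufficiently large,
\[
	\sum_{j}\frac{c_{j}\e^{-n_{j}}}{n_{j}^{s_{0}}} = \frac{1}{2\pi\I}\int_{(\kappa)}\zeta_{\MP}^{2k}(s_{0}+w)\Gamma(w)\dif w.
\]
I would then shift the contour to the line $\Re w = -\frac{1}{\log T}$, crossing a simple pole of $\Gamma$ at $w = 0$ with residue $\zeta_{\MP}^{2k}(s_{0})$, and a pole of order $2k$ of $\zeta_{\MP}^{2k}$ at $w = 1-s_{0}$. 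The polynomial growth of $\zeta_{\MP}^{2k}$ on vertical lines combined with the exponential decay of $\Gamma$ makes the shift valid.

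Both of the ``non-integral'' contributions should be $O_{k}(1)$: the Dirichlet sum on the left is bounded by $\sum_{j}c_{j}\e^{-n_{j}}\ll_{k}1$ in view of the Ramanujan-type bound $c_{j}\ll_{\eps}n_{j}^{\eps}$ and the rapid decay of the weight; and the residue at $w = 1-s_{0}$ is a linear combination of $\Gamma^{(j)}(1-s_{0})$, $j=0,\dotsc,2k-1$, with coefficients drawn from the local Laurent expansion of $\zeta_{\MP}^{2k}$ at $s=1$, so by Stirling it decays exponentially in $|t|$ and is certainly $O_{k}(1)$ uniformly for $|t|\le T$. For the main integral along $\Re w = -\frac{1}{\log T}$, the key pointwise input is
\[
	\abs[1]{\Gamma\bigl(-\tfrac{1}{\log T}+\I v\bigr)} \ll \log T \cdot \e^{-|v|}, \qquad v\in\R,
\]
which one obtains from $\Gamma(-\tfrac{1}{\log T}+\I v) = \Gamma(1-\tfrac{1}{\log T}+\I v)/(-\tfrac{1}{\log T}+\I v)$ and Stirling by splitting into the three regimes $|v|\le 1/\log T$ (where the left side is $\asymp \log T$), $1/\log T\le |v|\le 1$ (where it is $\asymp 1/|v|\le \log T$), and $|v|\ge 1$ (where $|\Gamma|\ll \e^{-\pi|v|/2}\le \e^{-|v|}$).

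Inserting this estimate into the contour integral gives the claimed bound on the truncation $|v|\le Z/2$; the tail $|v|>Z/2$ is absorbed into the additive $O(1)$ term, since the stronger exponential decay $\e^{-\pi|v|/2}$ there, combined with the polynomial bound $|\zeta_{\MP}(\frac{1+\theta}{2}+\I\tau)|\ll|\tau|^{O(1)}$ of Lemma \ref{polynomial bound}, produces a tail of size at most $T^{O_{k}(1)}\e^{-\pi Z/4}\ll 1$ as soon as $C_{2}$ is large enough (in terms of $k$). The main point of delicacy, in my view, is handling the $2k$-fold residue at $w = 1-s_{0}$: because it couples derivatives of $\Gamma$ with the local Laurent data of $\zeta_{\MP}^{2k}$ at its pole, some care is needed to verify that Stirling-type bounds render the whole expression uniformly $O_{k}(1)$ in $t$. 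Once this bookkeeping is done, taking absolute values and using $|\zeta_{\MP}^{2k}(s)|=|\zeta_{\MP}(s)|^{2k}$ concludes the argument.
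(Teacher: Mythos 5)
Your argument is correct and follows essentially the same route as the paper: Mellin representation of $\sum_{j}d_{2k}(n_{j})n_{j}^{-s_{0}}\e^{-n_{j}}$, contour shift to $\Re w = -\frac{1}{\log T}$ picking up the residues at $w=0$ and $w=1-s_{0}$, the $\Gamma$-bound $\abs{\Gamma(u+\I v)}\ll \e^{-|v|}\abs{u+\I v}^{-1}$, and truncation of the tail via exponential decay. (One small remark: the pointwise Ramanujan bound $c_{j}\ll_{\eps}n_{j}^{\eps}$ you invoke is not automatic for Beurling $d_{2k}$, but it is also not needed — a polynomial summatory bound on $\sum_{n_{j}\le x}d_{2k}(n_{j})$ suffices for $\sum_{j}d_{2k}(n_{j})\e^{-n_{j}}\ll_{k}1$.)
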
 
The lemma follows upon writing, with $s=\frac{1+\theta}{2}+\frac{1}{\log T} + \I t$, 
\[
	\frac{1}{2\pi\I}\int_{1-\I\infty}^{1+\I\infty}\zeta_{\MP}(s+w)^{2k}\Gamma(w)\dif w = \sum_{j=1}^{\infty}d_{2k}(n_{j})n_{j}^{-s}\e^{-n_{j}} \ll_{k} 1,
\]
 and shifting the contour to $\Re w = -\frac{1}{\log T}$. Here $d_{2k}$ is the $2k$-fold divisor function of the number system $(\MP, \MN)$. The switching of contours picks up two residues; for $w=0$ the residu is $\zeta_{\MP}(s)^{2k}$, and for $w=1-s$ the residu is $O_{k}(1)$, in view of the exponential decay of the gamma function and its derivatives. For the same reason, we can truncate the remaining integral to the interval $[-Z/2, Z/2]$, and the lemma then follows from inserting the estimate $\Gamma(u+\I v) \ll \e^{-|v|}\abs{u+\I v}^{-1}$, which holds uniformly for $v\in \R$ and $u$ in compacts.
 
 \medskip
 In the proof of the zero-density estimate, we now assume that $\alpha \ge \frac{1+\theta}{2}+\frac{2}{\log T}$ and shift the contour in \eqref{before contour switching} to $\Re w = \frac{1+\theta}{2}+\frac{1}{\log T} - \sigma$. We similarly obtain the estimate \eqref{R2 with higher moment}, but with $\frac{1+\theta}{2}$ replaced by $\frac{1+\theta}{2}+\frac{1}{\log T}$. Recalling that the $t_{\rho}$'s are $Z$-well-spaced, we may estimate the sum of the $\zeta_{\MP}^{2k}$-values by the $2k$-th moment of $\zeta_{\MP}$, in view of Ivi\'c' lemma, while we can keep the previously obtained bounds for the sum involving $M_{X}$ in both cases. As $Y \le T^{O_{k}(1)}$ we have $Y^{\frac{1}{\log T}} \ll_{k} 1$, so that we may complete the proof in the same way as before.
\end{proof}

\begin{proof}[Proof of Theorem \ref{zero-density subconvexity}]
%For the first zero-density estimate, we keep the bound \eqref{R1} for $R_{1}$. 
To estimate $R_{2}$, we will use the subconvexity bound \eqref{subconvexity bound} instead of the second moment estimate to treat the contribution of $\zeta_{\MP}(s)$. For zeros $\rho = \beta+\I\gamma\in \mathcal{R}_{2}$, we let $t_{\rho}$ be a real number with $\abs{\gamma-t_{\rho}} \le Z$ for which $\abs[1]{M_{X}\bigl(\frac{1+\theta}{2}+\I t_{\rho}\bigr)}$ is maximal on $[\gamma-Z, \gamma+Z]$. We recall that $Z = C_{2}\log T$ for some large constant $C_{2}$. Employing the subconvexity bound \eqref{subconvexity bound} in \eqref{type II} yields
\begin{equation}
\label{MX subconvexity}
	\abs{M_{X}\Bigl(\frac{1+\theta}{2}+\I t_{\rho}\Bigr)} \gg_{B} Y^{\alpha-\frac{1+\theta}{2}}T^{-B}(\log T)^{-1}.
\end{equation}

To prove \eqref{Ingham subconvexity}, we combine \eqref{MX subconvexity} with \eqref{sum MX} to obtain 
\[
	R_{2} \ll_{B} Y^{1+\theta-2\alpha}T^{2B}(T+X^{1-\theta})(\log T)^{5}.
\]
Keeping the bound \eqref{R1} for $R_{1}$ and setting $X=T^{\frac{1}{1-\theta}}$ and $Y = T^{\frac{1+2B}{1-\theta}}$, we obtain \eqref{Ingham subconvexity}.

\medskip
To prove \eqref{Montgomery-type}, we may assume that $\alpha \ge \frac{3+\theta}{4} + \delta$ for some $\delta = \delta(B)>0$, because otherwise \eqref{Montgomery-type} is trivial. Applying \eqref{MVT S subconvex} to the right hand side of \eqref{R1 prelim}, we get that for some $N$ with $X\le N\le Y^{2}$,  
\[
	R_{1} \ll_{B} (\log T)^{3}\bigl(N + N^{\frac{1+\theta}{2}}T^{B}R_{1}\bigr)N^{1-2\alpha}(\log N)^{3} \ll_{B} \bigl(N^{2-2\alpha} + N^{\frac{3+\theta}{2}-2\alpha}T^{B}R_{1}\bigr)(\log T)^{6}.
\]
Here we used that $\log Y \ll_{B} \log T$, which will follow from our choice of $Y$ later on. If $X^{2\alpha-\frac{3+\theta}{2}} \ge T^{B}(\log T)^{7}$ say, then the second term involving $R_{1}$ is negligible, so that
\begin{equation}
\label{R1 subconvex}
	R_{1} \ll_{B} Y^{2-2\alpha}(\log T)^{6}.
\end{equation}

For $R_{2}$, we perform a dyadic subdivision of \eqref{MX subconvexity} to see that there is some $K \le X$ such that 
\[
	\abs{\sum_{K<n_{j}\le 2K}\frac{\mu(n_{j})}{n_{j}^{\frac{1+\theta}{2}+\I t_{\rho}}}} \gg_{B} Y^{\alpha-\frac{1+\theta}{2}}T^{-B}(\log T)^{-1}(\log X)^{-1}
\]
holds for $\gg R_{2}(\log X)^{-1}$ zeros. Applying \eqref{MVT S subconvex}  and $\log X \le \log Y \ll_{B}\log T$ yields
\begin{align*}
	R_{2}	&\ll_{B} \sum_{\rho\in\mathcal{R}_{2}}\abs{\sum_{K<n_{j}\le 2K}\frac{\mu(n_{j})}{n_{j}^{\frac{1+\theta}{2}+\I t_{\rho}}}}^{2} Y^{1+\theta-2\alpha}T^{2B}(\log T)^{5} \\
			&\ll \bigl(K + K^{\frac{1+\theta}{2}}T^{B}R_{2}\bigr)K^{1-(1+\theta)}Y^{1+\theta-2\alpha}T^{2B}(\log T)^{5} \\
			&\ll \bigl(X^{1-\theta}Y^{1+\theta-2\alpha}T^{2B} + X^{\frac{1-\theta}{2}}Y^{1+\theta-2\alpha}T^{3B}R_{2}\bigr)(\log T)^{5}.
\end{align*}
The second term involving $R_{2}$ is negligible if $Y^{2\alpha-1-\theta} \ge X^{\frac{1-\theta}{2}}T^{3B}(\log T)^{6}$ say, in which case we get
\begin{equation}
\label{R2 subconvex}
	R_{2} \ll_{B} X^{1-\theta}Y^{1+\theta-2\alpha}T^{2B}(\log T)^{5}.
\end{equation}

We now choose $X$ and $Y$ so that
\begin{align*}
	X^{2\alpha-\frac{3+\theta}{2}} &= T^{B}(\log T)^{7} \quad &&\text{and} \quad &Y^{2\alpha-1-\theta} &= X^{\frac{1-\theta}{2}}T^{3B}(\log T)^{6}, \quad \text{i.e.} \\
	X &= T^{\frac{2B}{4\alpha-3-\theta}}(\log T)^{O_{B}(1)} \quad &&\text{and}  \quad &Y &= T^{\frac{4B(3\alpha-2-\theta)}{(4\alpha-3-\theta)(2\alpha-1-\theta)}}(\log T)^{O_{B}(1)}.
\end{align*}
Note that $X < Y$ and $\log Y \ll_{B}\log T$, as we assumed that $\alpha > \frac{3+\theta}{4}+\delta(B)$. Inserting these values into \eqref{R1 subconvex} and \eqref{R2 subconvex} we obtain after some calculations that
\[
	R_{1} \ll _{B}T^{\frac{8B(3\alpha-2-\theta)(1-\alpha)}{(2\alpha-1-\theta)(4\alpha-3-\theta)}}(\log T)^{O_{B}(1)}, 
	\quad R_{2} \ll_{B} T^{\frac{4B(1-\alpha)}{4\alpha-3-\theta}}(\log T)^{O_{B}(1)}.
\]
As $\alpha > \frac{3+\theta}{4}$, the bound for $R_{1}$ is the dominant quantity, and we obtain \eqref{Montgomery-type}.
\end{proof}
\begin{remark}
It could happen that one has a subconvexity bound, not on the line $\sigma=\frac{1+\theta}{2}$, but for some $\sigma$ closer to $1$, maybe with $\sigma=\sigma(T)$ even depending on $T$. It should be possible to obtain zero-density estimates in terms of the quantity $\sup_{\,\abs{t}\le T, \sigma\ge\sigma_{0}}\abs{\zeta_{\MP}(\sigma+\I t)}$, by shifting the contour of integration in the proofs to the line $\sigma=\sigma_{0}$.
\end{remark}

\subsection{A conjecture in the style of Montgomery}
We shall later see in Subsection \ref{DMV-example} that both the moment estimate \eqref{moment bound zeta} with $k>1$ and the subconvexity bound \eqref{subconvexity bound} with $B<1/2$ may fail. Another path to the density hypothesis is by assuming a stronger mean value theorem for (general) Dirichlet polynomials. Inspired by Montgomery's conjecture \cite[Conjecture 9.2]{Montgomery} we may formulate:
\begin{conjecture}
\label{Montgomery conjecture}
Let $\MN = (n_{j})_{j}$ be a sequence of $\theta$-well-behaved Beurling integers, and let $a_{j}$ be complex numbers with $|a_{j}|\le1$. Then uniformly for $\nu\in [1,2]$ and for every $\eps>0$,
\[
	\int_{T_{0}}^{T_{0}+T}\abs[2]{\sum_{n_{j}\le N}a_{j}n_{j}^{-\I t}}^{2\nu}\dif t \ll_{\eps} (TN^{\nu\theta} + N^{\nu})N^{\nu+\eps}.
\] 
\end{conjecture}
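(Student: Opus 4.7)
My plan is to prove the conjecture at the endpoints $\nu=1$ and $\nu=2$ and then interpolate to cover the interval.

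For $\nu=1$, the bound is immediate from Theorem \ref{MVT Dirichlet polynomials} with $\eta = N^{\theta-1}$. The $\theta$-well-behavedness of $\MN$ gives $\chi(n_j, N^\theta) \ll N^{\theta+\eps}$, while $\abs{a_j}\le 1$ together with $\#\{j : n_j \le N\} \ll N$ gives $\sum \abs{a_j}^2 \ll N$. Substituting yields $\int \abs{S}^2\,\dif t \ll (T + N^{1-\theta}) N^{\theta} N^{1+\eps} = (TN^{\theta} + N) N^{1+\eps}$, matching the target.

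The critical case is $\nu=2$. Writing $\abs{S(\I t)}^4 = \abs{S(\I t)^2}^2$, and noting that
\[
	S(\I t)^2 = \sum_{m} b_m m^{-\I t}, \qquad b_m = \sum_{n_j n_k = m}a_j a_k,
\]
is a generalized Dirichlet polynomial supported on the products $(n_j n_k)_{j,k}$ (counted with multiplicity) in $[1, N^2]$, one would apply Theorem \ref{MVT Dirichlet polynomials} with $\eta = N^{2\theta - 2}$ to this squared polynomial. Denoting by $\chi^{\sharp}(m, \lambda)$ the local density function of the product sequence, this yields
\[
	\int_{T_0}^{T_0+T}\abs{S}^4\,\dif t \ll (T + N^{2-2\theta}) \sum_{m}\chi^{\sharp}(m, N^{2\theta})\abs{b_m}^2,
\]
so the conjectured bound $(TN^{2\theta}+N^2)N^{2+\eps}$ reduces to the estimate $\sum_m \chi^{\sharp}(m, N^{2\theta})\abs{b_m}^2 \ll N^{2+2\theta+\eps}$. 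This is the heart of the matter: heuristically one expects $\abs{b_m}$ to behave like a generalized divisor function and $\chi^{\sharp}(m, N^{2\theta})$ to be of order $N^{2\theta+\eps}$ on average, but the well-behavedness hypothesis $N_\MN(x) = Ax + O(x^\theta)$ does not automatically control either quantity. A more viable route is to bound directly the additive-energy-type count
\[
	E = \#\bigl\{(j_1, k_1, j_2, k_2) : n_{j_i}, n_{k_i} \le N,\ \abs{n_{j_1}n_{k_1} - n_{j_2}n_{k_2}} \le N^{2\theta}\bigr\},
\]
for instance by fixing $(j_1, k_1, j_2)$ and counting Beurling integers $n_{k_2}$ in the interval around $n_{j_1}n_{k_1}/n_{j_2}$ of length $N^{2\theta}/n_{j_2}$, then summing. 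Reaching the target $E \ll N^{2+2\theta+\eps}$ from the sole hypothesis \eqref{well-behaved} is the main obstacle, and is essentially equivalent to a Ramanujan-type bound for the Beurling divisor function, which is known to fail in extremal examples such as the system of Diamond--Montgomery--Vaaler.

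Finally, for $\nu \in (1,2)$, write $\nu = 1+\lambda$ and apply H\"older's inequality in the form $\int\abs{S}^{2\nu}\,\dif t \le \bigl(\int\abs{S}^2\,\dif t\bigr)^{1-\lambda}\bigl(\int\abs{S}^4\,\dif t\bigr)^{\lambda}$. A direct computation shows that this interpolation matches the conjectured bound in the regimes $T \le N^{1-\theta}$ and $T \ge N^{2(1-\theta)}$, but is weaker by up to a factor $N^{\lambda(1-\lambda)(1-\theta)}$ in the intermediate range $N^{1-\theta} \le T \le N^{2(1-\theta)}$. Closing this gap would require either a direct attack at each $\nu$, or a refinement of the $\nu=2$ bound that separates contributions according to the relative sizes of $T$ and $N$, reflecting how the energy $E$ above splits into its ``diagonal'' and ``off-diagonal'' parts.
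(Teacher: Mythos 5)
This statement is a \emph{conjecture}: the paper presents it without any proof, modelled on Montgomery's conjecture for ordinary Dirichlet polynomials, and then explores its consequences (the density hypothesis) under the assumption that it holds. There is therefore no ``paper proof'' to compare against, and a blind proof was never going to succeed.

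That said, your analysis is sound and reaches the right conclusions about where the difficulty lies. The $\nu=1$ case does indeed follow directly from Theorem \ref{MVT Dirichlet polynomials} with $\eta = N^{\theta-1}$ exactly as you describe (and in fact without any $\eps$). Your reduction of the $\nu=2$ case to controlling $\sum_m \chi^{\sharp}(m, N^{2\theta})|b_m|^2$, and your observation that this is essentially a Ramanujan-type bound on the Beurling divisor function which the hypothesis \eqref{well-behaved} alone does not supply, correctly pinpoints the obstruction even at the integer endpoint. And your computation showing that H\"older interpolation between $\nu=1$ and $\nu=2$ is lossy by a factor $N^{\lambda(1-\lambda)(1-\theta)}$ in the intermediate regime $N^{1-\theta}\le T \le N^{2(1-\theta)}$ mirrors exactly why the classical Montgomery conjecture ($\theta=0$) is open for non-integer $\nu$ even though the endpoints are known there: the whole content of the conjecture is that the intermediate-$\nu$ mean values behave as if the $\nu$-th power of the polynomial were a genuine Dirichlet polynomial of length $N^{\nu}$, which is precisely what interpolation fails to capture. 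One small correction: the extremal construction you invoke is due to Diamond, Montgomery, and \emph{Vorhauer} (reference \cite{DiamondMontgomeryVorhauer}), not Vaaler. In summary, you have not proved the statement, but since it is a conjecture you could not have; what you have written is a correct account of why it is one.
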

The idea is that we may treat $(\sum_{n_{j}\le N}a_{j}n_{j}^{-\I t})^{\nu}$ as if it were a Dirichlet polynomial of length $N^{\nu}$, even when $\nu$ is not an integer.

From this one can again show that
\begin{equation}
\label{discrete Montgomery}
	\sum_{s\in\mathcal{S}}\abs[2]{\sum_{N<n_{j}\le2N}a_{j}n_{j}^{-s}}^{2\nu} \ll_{\eps} (TN^{\nu\theta} + N^{\nu})N^{\nu(1-2\alpha)+\eps},
\end{equation}
whenever $\mathcal{S}$ is a set of points $s=\sigma+\I t$, with $\sigma\ge\alpha$ and $t\in [-T,T]$ well-spaced, in the sense that for $s, s'\in \mathcal{S}$ with $s\neq s'$ we have $|t-t'|\ge1$.

This generalization of Montgomery's conjecture leads to the density hypothesis for Beurling zeta functions.
Indeed, following the proof of Section \ref{sec: proof}, we now fix some $\eps>0$ and set $X=T^{\eps}$. Then trivially $M_{X}(\frac{1+\theta}{2}+\I t_{\rho}) \ll T^{\eps}$. Inserting this bound in \eqref{type II} and letting $t_{\rho}$ now be such that $\abs[1]{\zeta_{\MP}\bigl(\frac{1+\theta}{2}+\I t_{\rho}\bigr)}$ is maximal, we may use the second moment estimate of $\zeta_{\MP}$ to see that
\[
	R_{2} \ll_{\eps} Y^{1+\theta-2\alpha}T^{1+\eps'}.
\]
We again use the convention that $\eps'$ denotes a (small) positive number, not necessarily the same at each occurrence, and which is at most a bounded multiple of the previously fixed $\eps$.

Again we may find an $N$ and $\gg R_{1}/\log T$ zeros $\rho$ for which 
\[
	\abs[3]{\sum_{N<n_{j}\le 2N}\frac{a_{j}\e^{-n_{j}/Y}}{n_{j}^{\rho}}} \gg \frac{1}{\log T}.
\]
We may in fact assume that $X\le N \le Y(\log Y)^{2}$ (and could also have done so in Section \ref{sec: proof}), as $\abs{\sum_{n_{j}> Y(\log Y)^{2}} a_{j}\e^{-n_{j}/Y}n_{j}^{-\rho}} \le 1/6$ if $Y$ is sufficiently large.
We first choose an integer $m$ so that $N^{m} \le Y(\log Y)^{2} < N^{m+1}$, and subsequently a number $\nu\in[1,2]$ so that $N^{m\nu} = Y(\log Y)^{2}$. Note that $m\ll_{\eps}1$. Applying then \eqref{discrete Montgomery} to the Dirichlet polynomials 
\[
	\biggl(\sum_{N<n_{j}\le 2N}\frac{a_{j}\e^{-n_{j}/Y}}{n_{j}^{\rho}}\biggr)^{m} = \sum_{N^{m}< n_{j}\le (2N)^{m}}\frac{b_{j}}{n_{j}^{\rho}},
\]
whose coefficients $b_{j}$ satisfy $\abs{b_{j}} \le d_{2m}(n_{j}) \ll_{\eps, \eps'} n_{j}^{\eps'}$, yields
\[
	R_{1} \ll_{\eps} T^{\eps'}(TY^{\theta} + Y)Y^{1-2\alpha+\eps'},
\]
from which
\[
	N(\alpha, T) \ll_{\eps} T^{\frac{2(1-\alpha)}{1-\theta}+\eps'},
\]
upon choosing $Y=T^{\frac{1}{1-\theta}}$.

%---------------------------------------------------------------------------------------------------------------------------------------%
\section{Limitations for zero-density estimates}
\subsection{Sharpness of the range of validity $\alpha\ge\frac{1+\theta}{2}$}
First we indicate why it is not possible to obtain a zero-density estimate beyond the line $\sigma=\frac{1+\theta}{2}$. In the case $\theta=0$, this is evident from the Riemann zeta function $\zeta$, for which it is known that $N(\zeta;1/2, T) \sim \frac{T}{2\pi}\log\frac{T}{2\pi}$.

In the case $\theta>0$, we consider a suitable rescaling of $\zeta(s)$. Set
\begin{equation}
\label{zeta_theta}
	\zeta_{\theta}(s) \coloneqq \zeta\Bigl(\frac{s-\theta}{1-\theta}\Bigr) = \sum_{n=1}^{\infty}n^{\frac{\theta}{1-\theta}}n^{-\frac{s}{1-\theta}}
	= \exp \biggl(\sum_{p}\sum_{m=1}^{\infty}\frac{p^{\frac{m\theta}{1-\theta}}}{m}p^{-\frac{ms}{1-\theta}}\biggr), \quad \Re s > 1.
\end{equation}
We want to see this as the Beurling zeta function of a number system; such a system would have as its generalized integers the numbers $n^{\frac{1}{1-\theta}}$, $n\in\N$, with ``multiplicity'' $n^{\frac{\theta}{1-\theta}}$. Unfortunately, there is not an actual sequence of primes $\MP$ which yields such integers: for starters the ``multiplicities'' $n^{\frac{1}{1-\theta}}$ are in general not integral.
However, the definition of Beurling number systems may be generalized as follows: a Beurling number system \emph{in the extended sense} is a pair of non-decreasing right-continuous unbounded functions $(\Pi(x), N(x))$, satisfying $\Pi(1)=0$, $N(1)=1$, and linked via the identity
\[
	\zeta(s) \coloneqq \int_{1^{-}}^{\infty}x^{-s}\dif N(x) = \exp\int_{1}^{\infty}x^{-s}\dif \Pi(x)
\]
whenever the integrals converge. Equivalently, 
\[
	\dif N(x) = \exp^{\ast}(\dif \Pi(x)) = \delta_{1}(x) + \sum_{m=1}^{\infty}\frac{1}{m!}(\dif\Pi(x))^{\ast m},
\]
where $\delta_{y}(x)$ denotes the Dirac delta measure concentrated at $y$, and $(\dif F)^{\ast m}$ denotes the $m$-th convolution power with respect to the multiplicative convolution of measures $\ast$, which is defined as 
\[
	\int_{E}\dif F \ast \dif G = \iint_{uv\in E}\dif F(u)\dif G(v).
\]
This convolution naturally generalizes Dirichlet convolution.
We refer to \cite[Chapters 2--3]{DiamondZhangbook} for more background on this notion.

In our case, we set
\[
	\dif \Pi_{\theta}(x) = \sum_{p}\sum_{m=1}^{\infty}\frac{p^{\frac{m\theta}{1-\theta}}}{m}\delta_{p^{\frac{m}{1-\theta}}}(x), 
	\quad \dif N_{\theta}(x) = \sum_{n=1}^{\infty}n^{\frac{\theta}{1-\theta}}\delta_{n^{\frac{1}{1-\theta}}}(x).
\]
This system has associated zeta function $\zeta_{\theta}(s)$, given by \eqref{zeta_theta}. Note that 
\begin{equation}
\label{density N_theta}
	N_{\theta}(x) = \sum_{n^{\frac{1}{1-\theta}}\le x}n^{\frac{\theta}{1-\theta}} = (1-\theta)x + O(x^{\theta}),
\end{equation}
and 
\[
	N\Bigl(\zeta_{\theta}; \frac{1+\theta}{2}, T\Bigr) \sim \frac{(1-\theta)T}{2\pi}\log\frac{(1-\theta)T}{2\pi},
\]
so that for this system in the extended sense, no zero density estimate beyond the ``critical line'' $\sigma = \frac{1+\theta}{2}$ is possible.

\medskip

We will now construct a sequence of Beurling primes $\MP$ for which $\zeta_{\MP}(s)$ has the same zeros as $\zeta_{\theta}(s)$, and for which the integers are also $\theta$-well-behaved. For this however, we will assume that $\theta\ge1/2$.

First, we define a ``prime-counting'' function $\pi_{\theta}(x)$ for which $\Pi_{\theta}(x) = \sum_{m=1}^{\infty}\frac{\pi_{\theta}(x^{1/m})}{m}$. By M\"obius inversion we obtain, with $\mu$ now denoting the classical M\"obius function,
\[
	\dif\pi_{\theta}(x) = \sum_{k=1}^{\infty}\frac{\mu(k)}{k}\dif\Pi_{\theta}(x^{1/k}) = \sum_{p}\sum_{m=1}^{\infty}a_{p,m}\delta_{p^{\frac{m}{1-\theta}}}(x),
\]
where
\[
	a_{p,m} \coloneqq \frac{1}{m}\sum_{k\mid m}\mu(k)\bigl(p^{\frac{m\theta}{1-\theta}}\bigr)^{1/k}.
\]
\begin{lemma}
Let $m\in\N$ and $u>1$. Then
\[
	\sum_{k\mid m}\mu(k)u^{1/k} > 0.
\]
\end{lemma}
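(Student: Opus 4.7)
The plan is to reduce to the squarefree case, then expand the exponential $u^{1/k} = e^{(\log u)/k}$ as a power series in $t = \log u$, interchange the order of summation, and recognise the resulting coefficients as values of a multiplicative function.

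First I would observe that $\mu(k) = 0$ unless $k$ is squarefree, so only divisors $k$ of the radical $\mathrm{rad}(m) = \prod_{p \mid m} p$ contribute. Setting $t = \log u > 0$ and using the Taylor expansion $e^x = \sum_{n \ge 0} x^n/n!$, I would write
\[
	\sum_{k \mid m} \mu(k) u^{1/k} = \sum_{k \mid \mathrm{rad}(m)} \mu(k) \sum_{n=0}^{\infty} \frac{t^n}{n!\, k^n} = \sum_{n=0}^{\infty} \frac{t^n}{n!} \sum_{k \mid \mathrm{rad}(m)} \frac{\mu(k)}{k^n},
\]
where the interchange is justified by absolute convergence.

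The inner sum $\sum_{k \mid \mathrm{rad}(m)} \mu(k)/k^n$ is evaluated via the standard Euler-product identity for multiplicative functions, giving
\[
	\sum_{k \mid \mathrm{rad}(m)} \frac{\mu(k)}{k^n} = \prod_{p \mid m}\biggl(1 - \frac{1}{p^n}\biggr).
\]
For $n = 0$ this product vanishes whenever $m > 1$, while for every $n \ge 1$ each factor $1 - p^{-n}$ is strictly positive. Hence if $m > 1$,
\[
	\sum_{k \mid m} \mu(k) u^{1/k} = \sum_{n=1}^{\infty} \frac{t^n}{n!} \prod_{p \mid m}\biggl(1 - \frac{1}{p^n}\biggr),
\]
which is a sum of strictly positive terms since $t > 0$; the case $m = 1$ reduces to the trivial inequality $u > 0$.

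There is no real obstacle here beyond justifying the interchange of sums, which is routine since the double series converges absolutely (the partial products $\prod_{p\mid m}(1 - p^{-n})$ are bounded by $1$, and $\sum t^n/n! = e^t$ converges). The only conceptual point is recognising that one should expand in $t = \log u$ rather than try to bound $u - u^{1/k}$ terms pairwise, as pairwise bounding quickly becomes combinatorially awkward.
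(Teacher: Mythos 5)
Your proof is correct, but it takes a genuinely different route from the paper's. The paper argues by calculus: with $f_0(u) = \sum_{k\mid m}\mu(k)u^{1/k}$ it notes $f_0(1) = \sum_{k\mid m}\mu(k) = 0$ and reduces positivity for $u>1$ to showing $f_0' > 0$; writing $f_0'(u) = f_1(u)/u$ with $f_1(u) = \sum_{k\mid m}\frac{\mu(k)}{k}u^{1/k-1}$, it observes $f_1(1) = \prod_{p\mid m}(1-1/p) > 0$ and then bounds $f_1'(u) \ge 1 - \sum_{k\ge2}k^{-2} > 0$ crudely. Your proof instead sets $t=\log u$ and expands $\sum_{k\mid m}\mu(k)e^{t/k} = \sum_{n\ge0}\frac{t^n}{n!}\prod_{p\mid m}(1-p^{-n})$, noting that the $n=0$ coefficient vanishes (for $m>1$) and all later coefficients are strictly positive. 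The two arguments are closely related in spirit---your Taylor coefficient for $n=1$ is exactly the paper's $f_1(1)$, and you are in effect iterating the paper's differentiation step to all orders---but yours is cleaner in that it exhibits the quantity as a sum of manifestly nonnegative terms and in fact proves the slightly stronger fact that all derivatives $\frac{d^n}{dt^n}\big|_{t=0}\sum_{k\mid m}\mu(k)e^{t/k}$ are nonnegative. The paper's version is marginally shorter, needing only two differentiations and one crude numerical bound rather than the Euler-product evaluation of the coefficients.
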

\begin{proof}
The lemma is trivial in case that $m=1$, so we assume $m\ge2$. Set $f_{0}(u) = \sum_{k\mid m}\mu(k)u^{1/k}$. As $f_{0}(1) = \sum_{k\mid m}\mu(m)=0$, it suffices to show that
$f_{0}'(u) > 0$ for $u>1$. Write 
\[
	f_{0}'(u) = \sum_{k\mid m}\frac{\mu(k)}{k}u^{1/k-1} \eqqcolon \frac{f_{1}(u)}{u}.
\]
Now we indeed have $f_{1}(u) > 0$ for $u\ge 1$: 
\[
	f_{1}(1) = \sum_{k\mid m}\frac{\mu(k)}{k} = \prod_{p\mid m}\Bigl(1-\frac{1}{p}\Bigr) > 0,
\]
and 
\[
	f_{1}'(u) = \sum_{k\mid m}\frac{\mu(k)}{k^{2}}u^{1/k-1} \ge 1 - \sum_{k=2}^{\infty}\frac{1}{k^{2}} > 0.
\]

%By induction, it suffices to show that there exists a positive integer $j$ for which
%\[
%	f_{j}(u) \coloneqq \sum_{k\mid m}\frac{\mu(k)}{k^{j}}u^{1/k} > 0, \quad u>1.
%\]
%Such a $j$ always exists, take for example $j$ so large that 
%\[
%	1 - \sum_{\substack{k\mid m\\k>1}}\frac{1}{k^{j}} > 0.
%\]
\end{proof}

By the lemma, the coefficients $a_{p,m}$ are positive and $\dif \pi_{\theta}(x)$ is a positive measure. We now construct our sequence of generalized primes $\MP$ as the sequence of numbers $p^{\frac{m}{1-\theta}}$ with multiplicity $\lfloor a_{p,m}\rfloor$. Hence
\[
	\dif\pi_{\MP}(x) = \sum_{p}\sum_{m=1}^{\infty}\lfloor a_{p,m}\rfloor \delta_{p^{\frac{m}{1-\theta}}}(x).
\]
Note that $\pi_{R}(x) \coloneqq \pi_{\theta}(x) - \pi_{\MP}(x) \ge 0$. Setting $M =M_{x} = \lfloor (1-\theta)\frac{\log x}{\log 2}\rfloor$, we have the upper bound
\[
	\pi_{R}(x) \le \sum_{p^{\frac{m}{1-\theta}}\le x}1 = \pi(x^{1-\theta}) + \pi(x^{\frac{1-\theta}{2}}) + \dotsb + \pi(x^{\frac{1-\theta}{M}}) \le \Li(x^{1-\theta}) + O\Bigl(\frac{x^{1-\theta}}{(\log x)^{2}}\Bigr),
\]
by a weak form of the prime number theorem with remainder. Consequently, we also have
\begin{equation}
\label{bound Pi_R}
	\Pi_{R}(x) \coloneqq \sum_{m=1}^{\infty}\frac{\pi_{R}(x^{1/m})}{m} \le  \Li(x^{1-\theta}) + O\Bigl(\frac{x^{1-\theta}}{(\log x)^{2}}\Bigr).
\end{equation}

We will now consider $\pi_{R}(x)$ as the ``prime-counting function'' of a system in the extended sense. The system $(\MP, \MN)$ is then obtained by removing the ``primes'' of the system given by $\pi_{R}(x)$ from the ``primes'' of the system given by $\pi_{\theta}$. Consequently, we may calculate the integer-counting function $N_{\MP}(x)$ of the system $(\MP, \MN)$ as a convolution of $\dif N_{\theta}$ with the ``M\"obius function'' of $\pi_{R}$. 

Let $N_{R}(x)$ be the ``integer-counting'' function associated with $\Pi_{R}(x)$: $\dif N_{R}(x) = \exp^{\ast}(\dif\Pi_{R}(x))$, or equivalently, $N_{R}(x)$ is the unique function  with $N_{R}(1)= 1$ and for which
\[
	\zeta_{R}(s) \coloneqq \int_{1^{-}}^{\infty}x^{-s}\dif N_{R}(x) = \exp\int_{1}^{\infty}x^{-s}\dif\Pi_{R}(x).
\] 
Using a criterium for so-called ``O-density'' of integers by Diamond and Zhang, we show:
\begin{lemma}
\label{bound N_R}
We have the upper bound
\[
	N_{R}(x) \ll x^{1-\theta}.
\]
\end{lemma}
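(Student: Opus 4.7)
The strategy is to invoke the O-density criterion of Diamond and Zhang, as suggested by the sentence preceding the lemma. This criterion translates asymptotic upper bounds on the prime-counting function $\Pi$ of a Beurling system in the extended sense into pointwise upper bounds on the integer-counting function $N$. The bound \eqref{bound Pi_R} gives
\[
	\Pi_{R}(x) \le \Li(x^{1-\theta}) + O\Bigl(\frac{x^{1-\theta}}{(\log x)^{2}}\Bigr) = (1+o(1))\Li(x^{1-\theta}),
\]
which is precisely the prime-counting asymptotic expected from a system whose integers have bounded density at exponent $1-\theta$. Feeding this into the criterion should yield $N_{R}(x) \ll x^{1-\theta}$.

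Concretely, I would first recall the precise form of the Diamond--Zhang O-density theorem from \cite{DiamondZhangbook}: roughly, if the prime-counting function of a Beurling system satisfies $\Pi(x) \ll \Li(x^{\alpha})$ for some $\alpha \in (0,1]$ (with a matching leading constant), then $N(x) \ll x^{\alpha}$. I would next verify its hypothesis in our setting using \eqref{bound Pi_R} with $\alpha = 1-\theta$; the error term $O(x^{1-\theta}/(\log x)^{2})$ is negligible compared with $\Li(x^{1-\theta}) \asymp x^{1-\theta}/\log x$ and poses no obstruction. The conclusion $N_{R}(x) \ll x^{1-\theta}$ would then follow by direct application of the criterion.

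The main obstacle is that $\dif\pi_{R}$ is a positive measure rather than the counting measure of a discrete set of generalized primes, whereas the criterion in \cite{DiamondZhangbook} is typically formulated for honest Beurling prime systems. Its proof, however, is Tauberian in nature and depends only on analytic properties of $\zeta_{R}(s) = \exp\int_{1}^{\infty}x^{-s}\dif\Pi_{R}(x)$ in a right half-plane together with the non-negativity of $\dif\pi_{R}$; both ingredients remain valid in our setting, so the extension to systems in the extended sense should be routine. As a fallback, one may construct an explicit majorant system in the extended sense --- for example one associated to a rescaled Riemann zeta function as in \eqref{zeta_theta} --- whose integer-counting function is manifestly $O(x^{1-\theta})$, and then dominate $N_{R}$ by it using the monotonicity of the convolution exponential $\exp^{\ast}$ in $\dif\Pi$.
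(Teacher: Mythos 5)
Your proposal is essentially the paper's argument: both apply the Diamond--Zhang O-density criterion to \eqref{bound Pi_R}, and your concern about extended-sense systems is unfounded, since that part of Diamond--Zhang is already phrased for general non-decreasing right-continuous $\Pi$. The one refinement in the paper is that \cite[Prop.\ 6.11]{DiamondZhangbook} is stated for systems normalized at density exponent $1$ and in terms of the Chebyshev-type function $\tilde\psi(x)=\int_1^x\log u\,\dif\tilde\Pi(u)$, so the paper first rescales $\tilde\Pi(x)=\Pi_R(x^{1/(1-\theta)})$, deduces $\tilde\psi(x)\le x+O(x/\log x)$ from \eqref{bound Pi_R}, and then the proposition gives $\tilde N(x)\ll x$, i.e.\ $N_R(x)\ll x^{1-\theta}$.
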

\begin{proof}
In order to apply the criterium of Diamond and Zhang, we first perform a rescaling and set
\[	
	\tilde{\Pi}(x) \coloneqq \Pi_{R}(x^{\frac{1}{1-\theta}}), \quad \dif\tilde{N}(x) \coloneqq \exp^{\ast}(\dif\tilde{\Pi}(x)).
\]
Then $\tilde{N}(x) = N_{R}(x^{\frac{1}{1-\theta}})$. Setting $\tilde{\psi}(x) \coloneqq \int_{1}^{x}\log u \dif\tilde{\Pi}(u)$, the bound \eqref{bound Pi_R} yields
\[
	\tilde{\psi}(x) \le x + O\Bigl(\frac{x}{\log x}\Bigr). 
\]
By \cite[Prop.\ 6.11]{DiamondZhangbook}, it follows that $\tilde{N}(x) \ll x$.
\end{proof}

Next, we set $\dif M_{R}(x) \coloneqq (\dif N_{R}(x))^{\ast-1}$, the convolution inverse of $\dif N_{R}(x)$. The function $M_{R}(x)$ is the natural analogue of the summatory function of the M\"obius function and we have 
\[
	\int_{1^{-}}^{\infty}x^{-s}\dif M_{R}(x) = \frac{1}{\zeta_{R}(s)}.
\]
As $\abs{\dif M_{R}(u)} = \abs{\exp^{\ast}(-\dif\Pi_{R}(u))} \le \exp^{\ast}(\dif\Pi_{R}(u)) = \dif N_{R}(u)$ and $N_{R}(x)\ll x^{1-\theta}$, the above integral converges absolutely for $\Re s > 1-\theta$.
The counting measure of the integers of our system $(\MP, \MN)$ is now given as the convolution $\dif N_{\MP}(x) = \dif N_{\theta}(x) \ast \dif M_{R}(x)$. On the zeta-side this equation translates to $\zeta_{\MP}(s) = \zeta_{\theta}(s)/\zeta_{R}(s)$. We now estimate this convolution:
\begin{align*}
	N_{\MP}(x) 	&= \int_{1^{-}}^{x}\dif N_{\theta}\ast\dif M_{R} = \int_{1^{-}}^{x}N_{\theta}(x/u)\dif M_{R}(u)\\
				&=\int_{1^{-}}^{x}\Bigl((1-\theta)x/u+O\bigr((x/u)^{\theta}\bigr)\Bigr)\dif M_{R}(u)\\
				&= \frac{(1-\theta)x}{\zeta_{R}(1)} + O\biggl(x\int_{x}^{\infty}\frac{\dif N_{R}(u)}{u} + x^{\theta}\int_{1^{-}}^{x}u^{-\theta}\dif N_{R}(u)\biggr),
\end{align*}
where in the second line we used \eqref{density N_theta}, and in the third line we used $\abs{\dif M_{R}(u)} \le \dif N_{R}(u)$. Integrating by parts, using Lemma \ref{bound N_R}, and recalling that we assumed $\theta \ge 1/2$, we see that the above error term is 
\[
	\ll x^{1-\theta} + x^{\theta}\int_{1}^{x}u^{-2\theta}\dif u,
\]
which is $\ll x^{\theta}$ if $\theta>1/2$, and $\ll x^{1/2}\log x$ if $\theta=1/2$. Hence we obtain
\[
	N_{\MP}(x) = \frac{1-\theta}{\zeta_{R}(1)}x + O\bigl(x^{\theta}(1+ \mathcal{I}_{\theta=1/2}\log x)\bigr),
\]
where $\mathcal{I}_{\theta=1/2}=1$ if $\theta=1/2$ and $0$ else.

Finally, 
\[
	\zeta_{\MP}(s) = \frac{\zeta_{\theta}(s)}{\zeta_{R}(s)} = \zeta_{\theta}(s)\exp\biggl(-\int_{1}^{\infty}x^{-s}\dif\Pi_{R}(x)\biggr).
\]
From \eqref{bound Pi_R} we see that $\int_{1}^{\infty}x^{-s}\dif\Pi_{R}(x)$ is analytic in $\Re s > 1-\theta$, so that in particular, 
\[
	N\Bigl(\zeta_{\MP}; \frac{1+\theta}{2}, T\Bigr) = N\Bigl(\zeta_{\theta}; \frac{1+\theta}{2}, T\Bigr) \sim \frac{(1-\theta)T}{2\pi}\log\frac{(1-\theta)T}{2\pi}.
\]
We have thus proved:
\begin{theorem}
Let $\theta \in [1/2,1)$. Then there exists a Beurling number system $(\MP, \MN)$ for which 
\[
	N_{\MP}(x) = Ax + O\bigl(x^{\theta}(1+\mathcal{I}_{\theta=1/2}\log x)\bigr)
\]
for some $A>0$, and whose zeta function $\zeta_{\MP}(s)$ satisfies
\[
	N\Bigl(\zeta_{\MP}; \frac{1+\theta}{2}, T\Bigr) \sim \frac{(1-\theta)T}{2\pi}\log\frac{(1-\theta)T}{2\pi}.
\]
\end{theorem}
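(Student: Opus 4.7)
The plan is to construct the desired system $(\MP, \MN)$ as a perturbation of the (extended) system associated with the rescaled zeta function $\zeta_{\theta}(s) = \zeta((s-\theta)/(1-\theta))$, which already has $N_{\theta}(x) = (1-\theta)x + O(x^{\theta})$ and exhibits $\sim \frac{(1-\theta)T}{2\pi}\log\frac{(1-\theta)T}{2\pi}$ zeros on the line $\sigma = \frac{1+\theta}{2}$ by virtue of the classical Riemann--von Mangoldt formula for $\zeta$. The obstruction is that the measure $\dif \pi_{\theta}(x) = \sum_{p,m} a_{p,m}\delta_{p^{m/(1-\theta)}}(x)$, obtained from $\dif\Pi_{\theta}$ by M\"obius inversion, has non-integer coefficients and so does not directly define a Beurling prime sequence.

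First, I would floor the coefficients, letting $\MP$ be the multiset whose underlying points are the $p^{m/(1-\theta)}$ with multiplicity $\lfloor a_{p,m}\rfloor$. The residual measure $\dif\pi_{R} = \dif\pi_{\theta} - \dif\pi_{\MP}$ is then non-negative, with each atom of mass $<1$, so $\pi_{R}(x)$ is bounded by the number of atoms below $x$, which in turn is bounded via the prime number theorem by $\Li(x^{1-\theta}) + O(x^{1-\theta}/(\log x)^{2})$. The same bound persists for $\Pi_{R}(x)$.

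Next, I would observe that $\zeta_{\MP}(s) = \zeta_{\theta}(s)/\zeta_{R}(s)$ and, equivalently on the counting side, $\dif N_{\MP} = \dif N_{\theta} \ast \dif M_{R}$, where $\dif M_{R} = (\dif N_{R})^{\ast -1}$ is the M\"obius analogue for the auxiliary system. The key technical step is bounding $N_{R}$; I would apply the Diamond--Zhang O-density criterion to the rescaled Chebyshev function $\tilde\psi(x) = \int_{1}^{x}\log u\,\dif \Pi_{R}(u^{1/(1-\theta)})$, which satisfies $\tilde\psi(x) \le x + O(x/\log x)$ by the bound on $\Pi_{R}$, yielding $N_{R}(x) \ll x^{1-\theta}$. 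Using $|\dif M_{R}| \le \dif N_{R}$ and substituting $N_{\theta}(x/u) = (1-\theta)x/u + O((x/u)^{\theta})$ into the convolution, one obtains $N_{\MP}(x) = Ax + O(x^{\theta}(1+\mathcal{I}_{\theta=1/2}\log x))$ with $A = (1-\theta)/\zeta_{R}(1)$. The assumption $\theta \ge 1/2$ enters precisely here, since the integral $\int_{1}^{x} u^{-2\theta}\dif u$ is $O(1)$ for $\theta>1/2$ and $O(\log x)$ for $\theta=1/2$, whereas for $\theta<1/2$ it would exceed the target error.

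Finally, since $\Pi_{R}$ has support in $[1,\infty)$ and satisfies $\Pi_{R}(x) \ll x^{1-\theta}$, the function $\log\zeta_{R}(s) = \int_{1}^{\infty} x^{-s}\dif\Pi_{R}(x)$ is analytic in $\Re s > 1-\theta$, so $\zeta_{R}$ is holomorphic and non-vanishing there. Because $\theta\ge 1/2$ implies $\frac{1+\theta}{2} > 1-\theta$, the zeros of $\zeta_{\MP}$ on the line $\sigma = \frac{1+\theta}{2}$ coincide with those of $\zeta_{\theta}$, and the asymptotic for $N(\zeta_{\MP}; \frac{1+\theta}{2}, T)$ follows from the Riemann--von Mangoldt formula applied under the change of variable $s \mapsto (s-\theta)/(1-\theta)$. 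The main obstacle throughout is ensuring that the floors $\lfloor a_{p,m}\rfloor$ produce a non-negative residual measure with good-quality bounds; the positivity lemma for $\sum_{k\mid m}\mu(k)u^{1/k}$ is what makes the flooring strategy work, and the Diamond--Zhang bound is what keeps the convolution error under control.
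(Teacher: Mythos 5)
Your proposal follows the paper's own construction step for step: the rescaled $\zeta_{\theta}$, M\"obius inversion to define $\dif\pi_{\theta}$ and the positivity of $\sum_{k\mid m}\mu(k)u^{1/k}$, flooring the coefficients to produce $\MP$, bounding $\pi_R$ and $\Pi_R$ via the prime number theorem, the Diamond--Zhang O-density criterion to get $N_R(x)\ll x^{1-\theta}$, the convolution estimate for $N_{\MP}$ with $\abs{\dif M_R}\le\dif N_R$, and the observation that $\zeta_R$ is zero-free in $\Re s>1-\theta$ so the zeros of $\zeta_\MP$ on $\sigma=\frac{1+\theta}{2}$ agree with those of $\zeta_\theta$. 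This is essentially the same argument as in the paper, with the same role for the hypothesis $\theta\ge1/2$.
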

\subsection{The extremal Diamond--Montgomery--Vorhauer-example}
\label{DMV-example}
Consider a density estimate of the form
\[
	N(\zeta_{\MP};\alpha,T) \ll_{\eps, \MP} T^{\frac{c_{1}(\alpha)(1-\alpha)}{1-\theta}+\eps}, \quad \frac{1+\theta}{2} \le \alpha \le 1,
\]
for every $\eps>0$ and every number system $(\MP, \MN)$ with $\theta$-well-behaved integers. It is an interesting question what the optimal value for $c_{1}(\alpha)$ is. Theorem \ref{zero-density theorem} shows that $c_{1}(\alpha) = \frac{4(1-\theta)}{3-2\alpha-\theta}$, which increases from $2$ to $4$ as $\alpha$ increases from $\frac{1+\theta}{2}$ to $1$, is admissible.

Regarding lower bounds, Diamond, Montgomery, and Vorhauer  \cite{DiamondMontgomeryVorhauer} showed that $c_{1}(\alpha) \ge 1$, at least when $\theta>1/2$. In their paper, they constructed for every $\theta \in (1/2, 1)$ a Beurling number system $(\MP_{\mathrm{DMV}}, \MN_{\mathrm{DMV}})$ with $N_{\mathrm{DMV}}(x) = Ax + O(x^{\theta})$ for some $A>0$ and for which $\zetaDMV(s)$ has infinitely many zeros on the contour $\sigma = 1 - \frac{a}{\log(|t|+2)}$, for some constant $a>0$. They showed that, as a result, no better remainder than the de la Vall\'ee-Poussin remainder $O\bigl(x\exp(-2\sqrt{a\log x})\bigr)$ can be obtained in the prime number theorem \eqref{PNT}. The authors also compute that for fixed $\alpha>\theta$, 
\[
	N(\zetaDMV; \alpha, T) = \Omega(T^{2\lambda(1-\alpha)}).
\]
Here $\lambda < \frac{1}{2(1-\theta)}$ is a parameter occurring in the definition of the number system which can be chosen arbitrarily close to $\frac{1}{2(1-\theta)}$. (In \cite{DiamondMontgomeryVorhauer}, this parameter is called $\alpha$.) Taking $\lambda = \frac{1}{2(1-\theta)}$ yields
\[
	N(\zetaDMV; \alpha, T) = \Omega\bigl(T^{\frac{1-\alpha}{1-\theta}}\bigr),
\]
at the cost of only having the slightly weaker $N_{\mathrm{DMV}}(x) = Ax + O_{\eps}(x^{\theta+\eps})$, for every $\eps>0$. (Probably the error term $O\bigl(x^{\theta}\exp(c(\log x)^{2/3})\bigr)$ with some $c>0$ is possible, see \cite[Chapter 17]{DiamondZhangbook} for the case $\theta=1/2$.) It would be interesting to construct examples of Beurling zeta functions, or even general Dirichlet series as in the statement of Theorem \ref{general theorem}, which exhibit $c_{1}(\alpha) > 1$ or perhaps even $c_{1}(\alpha) \ge 2$. 

One possible limiting factor to the amount of zeros of $\zetaDMV$ is the fact that it is an approximation of a ``continuous system''; more precisely, $\zetaDMV(s) = \zeta_{\mathrm{c}}(s)\e^{Z(s)}$, where $\zeta_{\mathrm{c}}(s)$ is the Mellin--Stieltjes transform of an \emph{absolutely continuous} measure $\dif N_{\mathrm{c}}$ (and $Z(s)$ satisfies $Z(\sigma+ \I t) \ll_{\sigma} \sqrt{\log(\,\abs{t}+2)}$ for $\sigma>1/2$). One might expect that Mellin--Stieltjes transforms of the form $\int_{1}^{\infty}x^{-s}\dif f(x) = \int_{1}^{\infty}x^{-s}f'(x)\dif x$ have in general fewer zeros than (discrete) Dirichlet series, for instance because of the fact that the mean values of $\int_{N}^{2N}x^{-s}f'(x)\dif x$ are better behaved (assuming $f$ is sufficiently ``nice'') than those of $\sum_{N< n_{j}\le 2N}a_{j}n_{j}^{-s}$.

\medskip
The zeta function $\zetaDMV(s)$ (with $\lambda=\frac{1}{2(1-\theta)}$) exhibits some other interesting extremal properties, which can be shown via routine calculations which we omit. For fixed $\sigma \in (\theta, 1)$, it satisfies
\[
	\zetaDMV(\sigma+\I t) =\Omega_{\eps}\bigl(\,\abs{t}^{\frac{1-\sigma}{1-\theta}-\eps}\bigr), \quad \text{ for every } \eps>0.
\]
This shows that the classical convexity bound $\zeta_{\MP}(\sigma+\I t) \ll_{\sigma} \abs{t}^{\frac{1-\sigma}{1-\theta}}$, $\theta<\sigma<1$ is essentially sharp for Beurling zeta functions $\zeta_{\MP}(s)$, and that the subconvexity bound \eqref{subconvexity bound} with some $B < 1/2$ may fail to hold.
 
Furthermore, letting $k > 1/2$ and $\theta < \sigma < 1 - \frac{1-\theta}{2k} = \frac{2k-1+\theta}{2k}$, one may verify that 
\[
	\frac{1}{T}\int_{0}^{T}\abs{\zetaDMV(\sigma+\I t)}^{2k}\dif t = \Omega_{\eps}\bigl(T^{\frac{2k-1+\theta-2k\sigma}{1-\theta}-\eps}\bigr), \quad \text{ for every } \eps>0.
\]
Setting $k=1$ gives that the condition $\sigma\ge\frac{1+\theta}{2}$ for having at most logarithmic growth of the second moment $\frac{1}{T}\int_{0}^{T}\abs{\zeta_{\MP}(\sigma+\I t)}^{2}\dif t$ in the class of $\theta$-well-behaved Beurling zeta functions $\zeta_{\MP}(s)$, $\theta>1/2$, is sharp. (In \cite{BrouckeHilberdink}, the range $\sigma\ge\frac{1+\theta}{2}$ was already shown to be sharp in a broader class of general Dirichlet series and for any $0\le \theta < 1$.) The Diamond--Montgomery--Vorhauer example also shows that higher order moment estimates \eqref{moment bound zeta} with $k>1$ are not available in general. 

Even if one is able to show a logarithmic bound for a higher order moment for all Beurling zeta functions on a line to the right of $\sigma = \frac{1+\theta}{2}$, this example shows that this line should be so close to $1$ that it most likely will not lead to an improvement in the zero-density estimate, at least not via the method of Section \ref{sec: proof}. More precisely, say that one is able to prove that for some $\sigma_{1} \in (\theta,1)$ and for some $k>1$, every Beurling zeta function $\zeta_{\MP}$ with $\theta$-well-behaved integers satisfies
\[
	\frac{1}{T}\int_{0}^{T}\abs[1]{\zeta_{\MP}\bigl(\sigma_{1}+\I t\bigr)}^{2k}\dif t \ll (\log T)^{O(1)}.
\]
The zeta function $\zetaDMV$ of Diamond, Montgomery, and Vorhauer then implies that $\sigma_{1} \ge 1-\frac{1-\theta}{2k}$. If one would like to exploit this $2k$-th moment, one would shift the contour in \eqref{before contour switching} to the line $\Re w = \sigma_{1} -\sigma$. Estimating $R_{2}$ similarly as in the proof of Theorem \ref{zero-density moments} would now only give
\[
	R_{2} \ll Y^{\frac{2k}{k+1}(\sigma_{1}-\alpha)}T^{\frac{1}{k+1}}\bigl(T + X^{2-2\sigma_{1}}\bigr)^{\frac{k}{k+1}}(\log T)^{O(1)},
\]
which is non-trivial only if $\alpha > \sigma_{1}$.
Choosing any $X \in [T^{\frac{1}{1-\theta}}, T^{\frac{1}{2-2\sigma_{1}}}]$ and comparing with the estimate \eqref{R1} of $R_{1}$ leads, in view of $\sigma_{1} \ge 1-\frac{1-\theta}{2k}$, to an estimate which is at best
\[
	N(\zeta_{\MP}; \alpha, T) \ll T^{\frac{2(k+1)(1-\alpha)}{3-2\alpha-\theta}}(\log T)^{O(1)},
\]
and hence inferior to the one provided by Theorem \ref{zero-density theorem}.

\subsection*{Funding} This work was supported by the Research Foundation -- Flanders [grant number 12ZZH23N].

\subsection*{Acknowledgements} The author would like to thank Gregory Debruyne for his helpful comments.

\end{document}